\newtheorem{theorem}{Theorem}[section]
\newtheorem{proposition}[theorem]{Proposition}
\newtheorem{lemma}[theorem]{Lemma}
\theoremstyle{condition}
\theoremstyle{remark}
\newtheorem{remark}{Remark}
\numberwithin{equation}{section}
\begin{document}
	\pagenumbering{arabic}

\bigskip\bigskip
\noindent{\Large\bf Precise large deviations for the total population of heavy-tailed critical branching processes with immigration
		\footnote{ This work was supported in part by NSFC (NO. 11971062) and  the National Key Research and Development Program of China (No. 2020YFA0712900).} }
	
\noindent
{Jiayan Guo$^{*}$\footnote{ *corresponding author, Email: guojiayan@mail.bnu.edu.cn. School of Mathematical Sciences \& Laboratory of Mathematics and Complex Systems, Beijing Normal University, Beijing 100875, P.R. China.}
		\quad 	
Wenming Hong\footnote{ Email: wmhong@bnu.edu.cn. School of Mathematical Sciences \& Laboratory of Mathematics and Complex Systems, Beijing Normal	University, Beijing 100875, P.R. China.} }

\begin{center}
\begin{minipage}{12cm}
\begin{center}\textbf{Abstract}\end{center}
\footnotesize
			
We focus on the partial sum $S_{n}=X_{1}+\cdots+X_{n}$ of the critical branching process with immigration $\{X_{n}\}$, when the offspring $\xi$ is regularly varying with index $\nu+1$ and the immigration $\eta$ is regularly varying with index $\delta$ $(0\leq \nu<\delta<1)$. The precise large deviation probabilities for $S_{n}$ are specified, that is, for some appropriate sequences $\{x_{n}\}$ and $\{y_{n}\}$, uniformly for $x_{n}\leq x\leq y_{n}$, $P(S_{n}>x)\sim nx^{-\delta/(1+\nu)}L(x)$, where $L(x)$ is a slowly varying function. Different from that of the subcritical case, here the upper bound $y_n$ is needed. Essentially, this is because the tail probability of the stationary distribution is determined by the offspring or the immigration in the subcritical case. But it is determined by both when the process is critical.
			
\bigskip

\textbf{Keywords:} critical branching process with immigration, total population, large deviation, regularly varying function, stationary distribution. 

\textbf{Mathematics Subject Classification}:  Primary 60J80; Secondary 60F10.
\end{minipage}
\end{center}

\section{Introduction}
	
	Let $X_{0}=0$ and  
	\begin{equation}\label{defGWI1}
		X_{n}=\sum_{i=1}^{X_{n-1}}\xi_{n,i}+\eta_{n}, \quad n\in\mathbb{N_{+}}, 
	\end{equation}
	(with the convention $\sum_{i=1}^{0} =0$), where $\{\xi_{n,i}\}_{n,i\in\mathbb{N_{+}}}$ and $\{\eta_{n}\}_{n\in\mathbb{N_{+}}}$ are two independent i.i.d. sequences of nonnegative integer-valued random variables.  Use $\xi$, $\eta$ for the generic copies and to exclude trivialities, we always assume that $P(\eta=0)<1$. We call $\{X_{n}\}$  a {\it  branching process with immigration with offspring $\xi$ and immigration $\eta$}.  Write $\alpha:=E\xi$, $\beta:=E\eta$ for their means, respectively. When $\alpha:=E\xi<1(=1, >1)$, we say the process is subcritical (critical, supercritical). 
	
	We are interested in the large deviation  probabilities $P(S_{n}>x)$ for the partial sum of the process, where 
		\[S_{n}=X_{1}+\cdots+X_{n}.\]	
	
	For the subcritical case, when Cram\'{e}r's condition is satisfied, namely, for some $\theta>0$, $Ee^{\theta\xi}<\infty$ and $Ee^{\theta\eta}<\infty$, Yu et al.  \cite{Yushihang} have provided the exact form of large and moderate deviations for the empirical mean of population ${S_{n}}/{n}$ and centered total population $S_{n}-ES_{n}$, where the rate functions are explicitly identified. When Cram\'{e}r's condition is not satisfied, Guo and Hong \cite{Guo1} proved a  precise behavior of large deviation probability $P(S_{n}>x)$, under the condition that the distribution of $\xi$ or $\eta$ is heavy-tailed. Then for some appropriate sequence $\{x_{n}\}$, uniformly for $x\geq x_{n}$,  $P(S_{n}>x)\sim c_{1}nP(\eta>x)$ or $P(S_{n}>x)\sim c_{2}nP(\xi>x)$, where $c_{1}$ and $c_{2}$ are constants related to the mean of offspring and immigration. Similar large deviation probability for subcritical branching process in random environment  was given in Guo et al. \cite{Guo2}.

	In the present paper,  we are interested in the critical case. Throughout this paper we assume that the generating functions of $\xi$ and $\eta$ have the following representation:
	\begin{equation}\label{conditionf}
		\quad f(x)=x+(1-x)^{1+\nu}L_{1}\left((1-x)^{-1}\right),
	\end{equation}
	\begin{equation}\label{conditiong}
		g(x)=1-(1-x)^{\delta}L_{2}\left((1-x)^{-1}\right),
	\end{equation}
	where $0\leq\nu<\delta<1$ and $L_{i}$ $(i=1,2)$ are slowly varying functions at infinity, i.e., for $\forall\lambda>0$, $L_{i}(\lambda x)/L_{i}(x)\rightarrow1$ as $x\rightarrow\infty$.

	It is shown in Foster and Williamson \cite{Foster} that if $\alpha\leq 1$, $\{X_{n}\}$ converges in distribution to a proper limit $X$  if and only if
	\begin{equation}\label{condition}
		\int_{0}^{1}\frac{1-g(x)}{f(x)-x}dx<\infty.
	\end{equation} 
	Note that condition \eqref{condition} is fulfilled under \eqref{conditionf} and \eqref{conditiong}, then as a consequence, there exists a  stationary distribution $X$ for the sequence  $\{X_{n}\}_{n\in\mathbb{N}}$.	We prove that $X$ is regularly varying with index $\delta-\nu$, and thus obtain precise large deviation probabilities of partial sum $S_{n}$. Since the conclusions may vary when $\nu$ is different, we divide them into two subsections.
	
	\emph{Throughout this paper, $f(x)=o(g(x))$ means $\lim\limits_{x\rightarrow\infty}f(x)/g(x)=0$, and $f(x)\sim g(x)$ means $\lim\limits_{x\rightarrow\infty}f(x)/g(x)=1$, for two vanishing (at infinity) functions. When the value of a positive constant is not of interest, we write $c$ for it.}

\subsection{$\nu\in(0,1)$: heavy-tailed case}
When $\nu\in(0,1)$, since $\forall x\in[0,1)$,
	\[\quad\quad\frac{f(x)-x}{(1-x)^{2}}=\sum_{n=0}^{\infty}\sum_{k=n+1}^{\infty}P(\xi>k)x^{n}=(1-x)^{\nu-1}L_{1}\left((1-x)^{-1}\right),\]
	\[\frac{1-g(x)}{1-x}=\sum_{n=0}^{\infty}P(\eta>n)x^{n}=(1-x)^{\delta-1}L_{2}\left((1-x)^{-1}\right),\]
	we have, by Monotone Density Theorem and Tauberian Theorem (for example, see Theorem 1.7.2 and Corollary 1.7.3 of Bingham et al. \cite{Bingham}), both the offspring $\xi$ and the immigration $\eta$ are regularly varying, i.e., as $x\rightarrow\infty$, 
	\begin{equation*}
		P(\xi>x)\sim cx^{-(\nu+1)}L_{1}(x),
	\end{equation*}
	\begin{equation*}
		P(\eta>x)\sim cx^{-\delta}L_{2}(x),
	\end{equation*}	
	so it is called the {\it heavy-tailed case}.	
	
		Since $L_{1}(x)$ is a slowly varying function, for $\forall\lambda$, we can write
	\[\frac{L_{1}(\lambda x)}{L_{1}(x)}=1+\Gamma_{1}(\lambda,x),\]
	where $\Gamma_{1}(\lambda,x)\rightarrow0$ as $x\rightarrow\infty$. When some positive function $\alpha(x)$ is given so that $\alpha(x)\rightarrow0$ and $\Gamma_{1}(\lambda,x)=o(\alpha(x))$ as $x\rightarrow\infty$ for $\forall \lambda>1$, $L_{1}(x)$ is called slowly varying function with remainder. 

	Hence  we suppose that for $\forall \lambda>1$, 
	\begin{equation}\label{conditionA1}\tag{A1}
	\Gamma_{1}(\lambda,x)=o\left(\frac{L_{1}(x)}{x^{\nu}}\right)\quad \text{as} \; x \rightarrow\infty,
	\end{equation} 
	which is equivalent to
	$L_{1}(x)=C_{1}+o(x^{-\nu})$ as $x \rightarrow\infty,$
	where $C_{1}$ is a positive constant (see Corollary 3.12.3 of Bingham et al. \cite{Bingham} or Proposition 2 of Imomov and Tukhtaev \cite{Imomov}).
	
	Similarly, for
	\[\frac{L_{2}(\lambda x)}{L_{2}(x)}=1+\Gamma_{2}(\lambda,x),\]
	we suppose that $\forall \lambda>1$, 
	\begin{equation}\label{conditionA2}\tag{A2}
		\Gamma_{2}(\lambda, x)=o\left(\frac{L_{2}(x)}{x^{\delta}}\right)\quad \text{as} \; x \rightarrow\infty.
	\end{equation} 
	and it is equivalent to
	$L_{2}(x)=C_{2}+o(x^{-\delta})$ as $x \rightarrow\infty,$
	where $C_{2}$ is a positive constant.
	
	The tail behavior of the stationary distribution $X$ in heavy-tailed case is contained in Proposition 1 of \cite{Imomov}, after applying the Tauberian Theorem.
	
\begin{theorem}[\cite{Imomov}]\label{theoremX1}
		If $0<\nu<\delta<1$ and \eqref{conditionA1}-\eqref{conditionA2} are satisfied, then as $x\rightarrow\infty$,
		\[P(X>x)\sim C(\Gamma(1+\nu-\delta))^{-1}x^{-(\delta-\nu)},\]
		where $C:=C_{2}C_{1}^{-1}(\delta-\nu)^{-1}$ is a positive constant, and $\Gamma(\cdot)$ is the Gamma function.
	\end{theorem}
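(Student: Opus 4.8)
The plan is to pass to generating functions and turn a singularity analysis near $s=1$ into a tail estimate via a Tauberian theorem. Write $F(s)=Es^{X}$ for the generating function of the stationary law. Conditioning on $X_{n-1}$ in \eqref{defGWI1} gives $Es^{X_{n}}=g(s)\,E\bigl[f(s)^{X_{n-1}}\bigr]$, so stationarity forces the functional equation $F(s)=g(s)\,F(f(s))$, and iterating yields the product representation $F(s)=\prod_{n\ge0}g(f_{n}(s))$, where $f_{0}(s)=s$ and $f_{n}=f\circ f_{n-1}$; this functional‑equation input is precisely the content of Proposition 1 of \cite{Imomov}, which one may alternatively cite. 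Taking logarithms and using \eqref{conditiong} together with $f_{n}(s)\to1$, I would first reduce the problem to the asymptotics, as $s\to1^{-}$, of
\[
-\log F(s)=-\sum_{n\ge0}\log g(f_{n}(s))\sim\sum_{n\ge0}\bigl(1-f_{n}(s)\bigr)^{\delta}L_{2}\bigl((1-f_{n}(s))^{-1}\bigr).
\]

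The next step is to control the iterates. Setting $u_{n}:=1-f_{n}(s)$ with $u_{0}=1-s$, the representation \eqref{conditionf} gives the recursion $u_{n+1}=u_{n}-u_{n}^{1+\nu}L_{1}(u_{n}^{-1})$. Under \eqref{conditionA1}, i.e. $L_{1}(x)=C_{1}+o(x^{-\nu})$, the linearization $v_{n}:=u_{n}^{-\nu}$ satisfies $v_{n+1}-v_{n}=\nu C_{1}+o(1)$, whence $u_{n}^{-\nu}=u_{0}^{-\nu}+\nu C_{1}n\,(1+o(1))$ and in particular $u_{n}\sim(\nu C_{1}n)^{-1/\nu}$ for large $n$. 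I would then evaluate the sum by comparison with the integral $\int_{0}^{\infty}u(n)^{\delta}\,dn$ driven by $u(n)^{-\nu}=u_{0}^{-\nu}+\nu C_{1}n$; substituting $w=u^{-\nu}$ and using $\delta/\nu>1$ makes the integral converge at $\infty$ and yields $\int_{0}^{\infty}u^{\delta}\,dn=u_{0}^{\delta-\nu}/(C_{1}(\delta-\nu))$. Replacing $L_{2}(u_{n}^{-1})$ by its limit $C_{2}$ via \eqref{conditionA2} then gives
\[
1-F(s)\sim-\log F(s)\sim\frac{C_{2}}{C_{1}(\delta-\nu)}\,(1-s)^{\delta-\nu}=C\,(1-s)^{\delta-\nu},\qquad s\to1^{-},
\]
with $C=C_{2}C_{1}^{-1}(\delta-\nu)^{-1}$.

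Finally I would invoke a Tauberian argument. Since $\sum_{n\ge0}P(X>n)s^{n}=(1-F(s))/(1-s)\sim C(1-s)^{-(1+\nu-\delta)}$ and $0<1+\nu-\delta<1$, Karamata's Tauberian theorem (Theorem 1.7.1 of \cite{Bingham}) gives $\sum_{k\le n}P(X>k)\sim C\,n^{1+\nu-\delta}/\Gamma(2+\nu-\delta)$; because $n\mapsto P(X>n)$ is monotone, the Monotone Density Theorem (Theorem 1.7.2 of \cite{Bingham}) upgrades this to
\[
P(X>x)\sim\frac{C}{\Gamma(1+\nu-\delta)}\,x^{-(\delta-\nu)},
\]
which is the claim (using $\Gamma(2+\nu-\delta)=(1+\nu-\delta)\Gamma(1+\nu-\delta)$).

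The main obstacle is the middle step: making the passage from the sum $\sum_{n}u_{n}^{\delta}L_{2}(u_{n}^{-1})$ to the integral rigorous uniformly as $s\to1^{-}$. One must show that the finitely many small‑$n$ terms, where $u_{n}$ is still comparable to $1-s$, do not affect the leading order, and that the accumulated errors from replacing $L_{1}$ and $L_{2}$ by the constants $C_{1}$ and $C_{2}$ remain $o((1-s)^{\delta-\nu})$. This is exactly where the remainder conditions \eqref{conditionA1}--\eqref{conditionA2} are indispensable: without a quantified rate the error terms could accumulate to the same order as the main term, and the constant $C=C_{2}C_{1}^{-1}(\delta-\nu)^{-1}$ would not survive.
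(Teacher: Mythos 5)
Your outline is correct and lands on the right constant, but you should know that the paper contains no internal proof of Theorem \ref{theoremX1}: it is quoted wholesale from Proposition 1 of \cite{Imomov} ``after applying the Tauberian Theorem''. What you re-derive in your middle section --- the product representation $P(s)=\prod_{k\ge0}g(f_{k}(s))$ (this, not just the functional equation, is \eqref{eqP} in the paper; Imomov's Proposition 1 is the full asymptotic $1-P(s)\sim C(1-s)^{\delta-\nu}$), the reduction $1-P(s)\sim\sum_{k}(1-g(f_{k}(s)))$, and the Slack-type iterate control $u_{n}^{-\nu}=u_{0}^{-\nu}+\nu C_{1}n(1+o(1))$ feeding a sum-to-integral comparison --- is precisely a reconstruction of that cited proposition, and your Karamata/Monotone-Density endgame, including $\Gamma(2+\nu-\delta)=(1+\nu-\delta)\Gamma(1+\nu-\delta)$, matches the paper's stated recipe exactly. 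It is instructive to compare with the case the paper \emph{does} prove ($\nu=0$, Lemma \ref{lemma} and Theorem \ref{theoremX2}): there the authors cannot use your change of variables $v_{n}=u_{n}^{-\nu}$, which degenerates at $\nu=0$, so they route the same sum through Slack's limit function $U$ and its inverse $V$, writing $1-f_{k}(x)=V(k+U(x))$ and $U'(x)=\psi(x)/((1-x)\Lambda(1-x))$, and integrate with Karamata's theorem. For $\nu\in(0,1)$ the two mechanisms are interchangeable; yours is the more elementary and self-contained, while the $U/V$ machinery is the one that survives the boundary case.

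Two corrections to your closing self-assessment, neither of which breaks the proof. First, the ``finitely many small-$n$ terms'' remark is wrong in an instructive way: since $u_{n+1}/u_{n}=1-u_{n}^{\nu}L_{1}(u_{n}^{-1})$, of order $(1-s)^{-\nu}$ iterations keep $u_{n}\asymp 1-s$, and these terms contribute a positive \emph{fraction} of the leading order $u_{0}^{\delta-\nu}$ rather than being negligible. What actually closes the argument is uniformity: $u_{n}\le u_{0}$ for all $n$, so the per-step errors in $v_{n+1}-v_{n}=\nu C_{1}+o(1)$ and in $L_{2}(u_{n}^{-1})=C_{2}+o(1)$ are uniformly small once $s$ is near $1$, making your linearization valid uniformly in $n$, with the sum--integral discrepancy bounded by the first (monotone) term $u_{0}^{\delta}=o(u_{0}^{\delta-\nu})$ --- your integral already accounts for the slow early phase correctly. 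Second, the remainder rates in \eqref{conditionA1}--\eqref{conditionA2} are not ``indispensable'' for this leading-order tail: in your scheme plain convergence $L_{i}(x)\to C_{i}$ suffices, since all error terms are controlled by $\sup_{u\le u_{0}}|L_{i}(u^{-1})-C_{i}|\to0$; the quantified remainders are what \cite{Imomov} exploits for finer second-order expansions, and here they simply come for free with the theorem's hypotheses.
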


	By using the branching properties and the tail probability of $X$, we can obtain precise large deviation probability of partial sum $S_{n}$. We have the following,
	
	\begin{theorem}\label{theorem1}
	If $0<\nu<\delta<1$ and \eqref{conditionA1}-\eqref{conditionA2} are satisfied, then there exists sequences $\{x_{n}\}\uparrow\infty$ and $\{y_{n}\}\uparrow\infty$, such that
	\begin{equation*}		
		\lim_{n\rightarrow\infty}\sup_{x_{n}\leq  x\leq y_{n}}\left|\frac{P(S_{n}>x)}{nx^{-\delta/(1+\nu)}L(x)}-1\right|=0,
	\end{equation*}
	where $L(x)$ is a slowly varying function determined by the generating function of $\xi$ and $\eta$, and one can choose $x_{n}=n^{\frac{1+\nu}{\delta}+\kappa_{1}}$, $y_{n}=n^{\frac{1+\nu}{\nu}-\kappa_{2}}$ for any $\kappa_{1}$, $\kappa_{2}>0$ with $\kappa_{1}+\kappa_{2}<(1+\nu)(\nu^{-1}-\delta^{-1})$.
	\end{theorem}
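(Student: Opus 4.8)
The plan is to turn the dependent sum $S_n$ into a sum of \emph{independent} family contributions, identify the tail of a single immigrant family's total progeny, and then run a single-big-jump argument. First I would decompose the population by immigration origin: writing $Z_{j,k}$ for the number of individuals alive at time $k$ descending from the immigration batch $\eta_j$ that entered at time $j\le k$, one has $X_k=\sum_{j=1}^k Z_{j,k}$ and hence $S_n=\sum_{j=1}^n W_j$ with $W_j:=\sum_{k=j}^n Z_{j,k}$. Since distinct batches evolve through disjoint families of offspring variables, the contributions $W_1,\dots,W_n$ are independent, and $W_j$ is distributed as $V_{n-j}$, the total population counted over $m+1$ generations of a Galton--Watson process started from $\eta$ founders, where $m=n-j$. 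Letting $N$ denote the full total progeny of one such family over all generations, one has $V_m\uparrow N$ and $W_j\le N_j$, where $N_1,\dots,N_n$ are i.i.d.\ copies of $N$.

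Next I would pin down the tail of $N$. If $h$ is the total-progeny generating function of a single particle, then $h(s)=sf(h(s))$, and substituting \eqref{conditionf} gives $1-h(s)\sim(1-s)^{1/(1+\nu)}\ell_1((1-s)^{-1})$ for a slowly varying $\ell_1$. Since $N$ has generating function $g(h(s))$, \eqref{conditiong} yields $1-g(h(s))\sim(1-s)^{\delta/(1+\nu)}\ell((1-s)^{-1})$, whence by the Tauberian theorem $P(N>x)\sim x^{-\delta/(1+\nu)}L(x)$ with $L$ slowly varying and determined by $f$ and $g$. This is exactly the normalising tail in the statement, i.e.\ $nx^{-\delta/(1+\nu)}L(x)\sim nP(N>x)$.

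For the upper bound I would use $S_n\le\sum_{j=1}^n N_j$ and invoke the precise large-deviation asymptotics for sums of i.i.d.\ regularly varying summands of index $\delta/(1+\nu)\in(0,1)$ (Nagaev / Denisov--Dieker--Shneer type): since the norming constant of this i.i.d.\ sum is of order $n^{(1+\nu)/\delta}$, the relation $P(\sum_{j=1}^n N_j>x)\sim nP(N>x)$ holds uniformly for $x\ge x_n=n^{(1+\nu)/\delta+\kappa_1}$, which is the role of the lower cutoff $x_n$. This already gives $\limsup_{n}\sup_{x\ge x_n}P(S_n>x)/(nP(N>x))\le1$, with no constraint from above.

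The lower bound is the main obstacle, and it is what forces the upper cutoff $y_n$. Here $W_j=V_{n-j}$ is only the \emph{truncated} progeny, so I must show that, for the overwhelming majority of batches, truncation at generation $n$ is invisible on the scale $x$. Writing $T$ for the extinction time of one family, iteration of $f$ shows $P(T_1>m)$ is regularly varying of index $-1/\nu$, and \eqref{conditiong} then gives $P(T>m)=1-g(1-P(T_1>m))$, regularly varying of index $-\delta/\nu$. Because a family attaining total progeny of order $x$ typically survives only about $x^{\nu/(1+\nu)}$ generations, and $y_n^{\nu/(1+\nu)}=n^{1-\kappa_2\nu/(1+\nu)}\ll n$, one obtains, uniformly for $m\ge n^{1-\epsilon}$ (with $0<\epsilon<\kappa_2\nu/(1+\nu)$) and $x\le y_n$, that $P(V_m>x)\ge P(N>x)-P(T>m)=P(N>x)(1-o(1))$, using $\{N>x\}\cap\{T\le m\}\subseteq\{V_m>x\}$. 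There are $n(1-o(1))$ such good batches; applying a Bonferroni lower bound to the independent events $\{W_j>x\}$, with the pairwise-intersection correction negligible since $nP(N>x)\le nP(N>x_n)\to0$, yields $P(S_n>x)\ge nP(N>x)(1-o(1))$ uniformly on $[x_n,y_n]$. Matching the two bounds finishes the proof. The delicate point throughout is uniformity in $x$ over the whole window, and in particular the quantitative comparison $P(T>m)=o(P(N>x))$, which breaks down precisely when $x$ approaches $n^{(1+\nu)/\nu}$ --- exactly why $y_n$ must remain a polynomial factor below it.
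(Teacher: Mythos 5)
Your proposal is correct, but it reaches the theorem by a genuinely different route in the part that actually matters. The upper bound is the same in both arguments: your $\sum_{j=1}^n N_j$ is exactly the paper's $S_{n,1}=Y_1^{(\infty)}+\cdots+Y_n^{(\infty)}$ (total progeny of each immigration batch, gf $g(h(s))$, tail $x^{-\delta/(1+\nu)}L(x)$ by the Tauberian theorem), and both invoke the Nagaev--Cline--Hsing uniform large-deviation asymptotics for i.i.d.\ nonnegative summands of index $\delta/(1+\nu)\in(0,1)$, which is what forces $x_n=n^{(1+\nu)/\delta+\kappa_1}$. The divergence is in controlling the truncation error, i.e.\ the lower bound. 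The paper keeps the error as a single global random variable $S_{n,2}=S_{n,1}-S_n\overset{d}{=}T^{(1)}+\cdots+T^{(\theta\circ X_n)}$, dominates it by its stationary limit $S^{(\infty)}=T^{(1)}+\cdots+T^{(\theta\circ X)}$, and uses Imomov's theorem (Theorem \ref{theoremX1}, which is where conditions \eqref{conditionA1}--\eqref{conditionA2} enter) together with the inheritance of regular variation under random sums to get $P(S^{(\infty)}>x)\sim x^{-(\delta-\nu)/(1+\nu)}L_5(x)$; the cutoff $y_n=n^{(1+\nu)/\nu-\kappa_2}$ is then read off from where this heavier tail would catch up with $nx^{-\delta/(1+\nu)}L(x)$. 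You instead work family by family: the inclusion $\{N>x\}\cap\{T\le m\}\subseteq\{V_m>x\}$, Slack's survival asymptotics ($P(T_1>m)$ regularly varying of index $-1/\nu$, hence batch survival of index $-\delta/\nu$ via \eqref{conditiong}), and a Bonferroni lower bound whose pairwise correction dies because $nP(N>x_n)\to0$; your exponent bookkeeping ($\epsilon<\kappa_2\nu/(1+\nu)$, polynomial gaps swallowing the slowly varying factors) is right, and it recovers the same threshold $y_n$ from the same underlying competition, now phrased as $P(T>n)$ versus $P(N>x)$. What each approach buys: the paper's yields the explicit regularly varying tail of the residual $S_{n,2}$ (and of the stationary law $X$), which is of independent interest and makes the sharpness of $y_n$ transparent, at the price of needing the remainder conditions \eqref{conditionA1}--\eqref{conditionA2}; your argument appears not to use \eqref{conditionA1}--\eqref{conditionA2} at all --- only the regular variation built into \eqref{conditionf}--\eqref{conditiong} and Slack's theorem --- so it is more elementary and suggests the conclusion holds without the remainder hypotheses, though it produces no tail information about $S_{n,2}$ or $X$ along the way.
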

	
\subsection{$\nu=0$: very heavy-tailed case}
When $\nu=0$, if $L_{1}(x)$ satisfies some requirements, for example $L_{1}(x)=(\log x)^{-a}$ with $a>0$, then 
	\begin{equation*}
		P(\xi>x)\sim cx^{-1}\tilde{L}_{1}(x),
	\end{equation*}
	where $\tilde{L}_{1}(x)$ is another slowly varying function, and it is called the {\it very heavy-tailed case}.

	Notice that by the criticality, $L_{1}(x)\rightarrow0$ as $x\rightarrow\infty$. So  we assume that for some $p>0$, as $x\rightarrow\infty$,
	\begin{equation}\label{conditionB}\tag{B}
		L_{2}(x)\sim pL_{1}(x),
	\end{equation}
	and thus $L_{2}(x)\rightarrow0$ as $x\rightarrow\infty$.
	
	For the distribution of $X$, \cite{Imomov} only consider the heavy-tailed case, here we also give the tail of $X$ in the very heavy-tailed case.

	\begin{theorem}\label{theoremX2}
		If $0=\nu<\delta<1$ and \eqref{conditionB} is satisfied, then as $x\rightarrow\infty$,
		\[P(X>x)\sim p\delta^{-1}(\Gamma(1-\delta))^{-1}x^{-\delta},\]
		where $\Gamma(\cdot)$ is the Gamma function.
	\end{theorem}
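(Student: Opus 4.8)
The plan is to determine the asymptotics of $1-\Phi(s)$ as $s\uparrow1$, where $\Phi(s):=Es^{X}$ is the generating function of the stationary law, and then to invert it by a Tauberian theorem. Since $X_{n}$ has generating function $\prod_{k=0}^{n-1}g(f_{k}(s))$, where $f_{k}$ denotes the $k$-th iterate of $f$ with $f_{0}(s)=s$, and this product converges under \eqref{condition}, the stationary generating function admits the product representation
\[\Phi(s)=\prod_{k=0}^{\infty}g\bigl(f_{k}(s)\bigr),\qquad -\log\Phi(s)=\sum_{k=0}^{\infty}\Bigl(-\log g\bigl(f_{k}(s)\bigr)\Bigr).\]
Writing $u_{k}:=1-f_{k}(s)$, so that $u_{0}=1-s$, the representation \eqref{conditiong} gives $-\log g(f_{k}(s))=u_{k}^{\delta}L_{2}(u_{k}^{-1})(1+o(1))$, and the quadratic remainder in the logarithm is negligible because each of its terms is $u_{k}^{\delta}L_{2}(u_{k}^{-1})=o(1)$ times the corresponding linear term, uniformly in $k$ as $s\uparrow1$ (all $u_{k}\le u_{0}\to0$). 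Hence the first step is to reduce the problem to the asymptotics of $\sum_{k\ge0}u_{k}^{\delta}L_{2}(u_{k}^{-1})$.

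For $\nu=0$ the offspring representation \eqref{conditionf} yields the recursion $u_{k+1}=u_{k}-u_{k}L_{1}(u_{k}^{-1})$, equivalently $L_{1}(u_{k}^{-1})=(u_{k}-u_{k+1})/u_{k}$. Applying condition \eqref{conditionB}, i.e.\ $L_{2}\sim pL_{1}$, uniformly in $k$ (legitimate since $u_{k}^{-1}\ge(1-s)^{-1}\to\infty$) turns the summand into $pu_{k}^{\delta-1}(u_{k}-u_{k+1})(1+o(1))$, which I recognize as a right-endpoint Riemann sum for $\int_{0}^{1-s}t^{\delta-1}\,dt$ associated with the partition $0<\cdots<u_{2}<u_{1}<u_{0}=1-s$. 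Since $t^{\delta-1}$ is decreasing,
\[u_{k}^{\delta-1}(u_{k}-u_{k+1})\le\int_{u_{k+1}}^{u_{k}}t^{\delta-1}\,dt\le\bigl(1-L_{1}(u_{k}^{-1})\bigr)^{\delta-1}u_{k}^{\delta-1}(u_{k}-u_{k+1}),\]
and criticality forces $L_{1}(x)\to0$, so the factor $(1-L_{1}(u_{k}^{-1}))^{\delta-1}\to1$ uniformly in $k$ as $s\uparrow1$. Summing and sandwiching gives $\sum_{k}u_{k}^{\delta-1}(u_{k}-u_{k+1})\sim\int_{0}^{1-s}t^{\delta-1}\,dt=\delta^{-1}(1-s)^{\delta}$, whence $1-\Phi(s)\sim-\log\Phi(s)\sim\frac{p}{\delta}(1-s)^{\delta}$. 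Applying a Tauberian theorem (Bingham et al.\ \cite{Bingham}) to the tail generating function $(1-\Phi(s))/(1-s)\sim\frac{p}{\delta}(1-s)^{-(1-\delta)}$, whose coefficients $P(X>n)$ are monotone, then yields $P(X>x)\sim\frac{p}{\delta\,\Gamma(1-\delta)}x^{-\delta}$, the claimed result.

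The main obstacle is the Riemann-sum step, which is delicate exactly because $\nu=0$: in contrast to the heavy-tailed case $\nu>0$ of Theorem \ref{theoremX1}, where $u_{k}$ decays polynomially and can be pinned down explicitly, here $u_{k}$ decays only sub-polynomially and has no closed-form rate. The argument must therefore avoid any explicit asymptotics for $u_{k}$ and rely solely on the ratio $u_{k+1}/u_{k}=1-L_{1}(u_{k}^{-1})\to1$. Securing the needed \emph{uniformity} in $k$---of this ratio, of the replacement $L_{2}\sim pL_{1}$, and of the linearization $-\log g\approx 1-g$---as $s\uparrow1$ is the crux; once all three hold uniformly over the infinitely many iterates, the sandwiching collapses the infinite product to the single integral $p\int_{0}^{1-s}t^{\delta-1}\,dt$ and the remainder of the argument is routine.
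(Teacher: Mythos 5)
Your proof is correct, but it takes a genuinely different route from the paper's. The paper first establishes a rate-of-convergence lemma (Lemma \ref{lemma}): $P_{n}(x)=P(x)\left[1+p\delta^{-1}(1-f_{n}(x))^{\delta}(1+o(1))\right]$, whose proof imports Slack's stationary-measure function $U$ with $U(f(x))=U(x)+1$, the bracket $f'(x)\leq\psi(x)\leq1$ for $U'$, the inverse function $V$, and Karamata's theorem to evaluate $\Sigma_{n}(x)=\sum_{k\geq n}[1-g(f_{k}(x))]$ as the integral $\int_{f_{n}(x)}^{1}\psi(y)\,\mathscr{L}(1/(1-y))(1-y)^{\delta-1}dy\sim p\delta^{-1}[R_{n}(x)]^{\delta}$; Theorem \ref{theoremX2} is then deduced by bracketing $s$ between $f_{\lambda}(0)$ and $f_{\lambda+1}(0)$ and using $1-f_{\lambda+1}(0)\sim1-f_{\lambda}(0)$ (from $L_{1}\to0$) to transfer the lemma along the orbit of $0$ into $1-P(s)\sim p\delta^{-1}(1-s)^{\delta}$. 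You instead work directly at $s\uparrow1$ with $n=0$: the exact identities $1-g(f_{k}(s))=u_{k}^{\delta}L_{2}(u_{k}^{-1})$ and $L_{1}(u_{k}^{-1})=(u_{k}-u_{k+1})/u_{k}$ (the paper's \eqref{eq21}) turn the sum into a telescoping Riemann sum for $p\int_{0}^{1-s}t^{\delta-1}dt$, and your monotone sandwich together with the uniform smallness of $L_{1}$, of $|L_{2}/(pL_{1})-1|$, and of the logarithm's quadratic remainder over $x\geq(1-s)^{-1}$ --- all legitimately uniform in $k$ because $u_{k}\leq u_{0}=1-s$ and the relevant functions have limits at infinity (no monotonicity of $L_{1},L_{2}$ needed) --- collapses everything to $1-\Phi(s)\sim-\log\Phi(s)\sim p\delta^{-1}(1-s)^{\delta}$. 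The Tauberian endgame is the same as the paper's, and your constant is right: Karamata plus monotone density gives $P(X>x)\sim\frac{p/\delta}{\Gamma(2-\delta)}(1-\delta)x^{-\delta}=\frac{p}{\delta\Gamma(1-\delta)}x^{-\delta}$. Your discrete sum is, of course, the Riemann-sum analogue of the paper's Karamata integral, but your argument is more elementary and self-contained: it bypasses Lemma \ref{lemma} and the entire $U$, $\psi$, $V$ machinery borrowed from Slack and Imomov--Tukhtaev, and, as you correctly emphasize, it needs no asymptotics for $u_{k}$ in $k$ --- only the exact ratio $u_{k+1}/u_{k}=1-L_{1}(u_{k}^{-1})\to1$ uniformly. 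What the paper's detour buys in exchange is Lemma \ref{lemma} itself, i.e.\ the speed of convergence of $P_{n}(x)$ to $P(x)$ for fixed $x$, a statement of independent interest that parallels the $\nu>0$ analysis of \cite{Imomov} and which your direct computation of $1-\Phi(s)$ does not yield.
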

	
	\begin{remark}	
		As for the tail probability of the stationary distribution, previous research mainly focuses on the subcritical case. For example, Basrak et al. \cite{Basrak} proved that $X$ is also regularly varying  when $\xi$ or $\eta$ is regularly varying, and Foss and Miyazawa \cite{Foss} extended their results onto the strong subexponential case, which is more general. The critical case is more involved, Seneta \cite{Seneta67} considered an example that $f(x)=x+a\gamma^{-1}(1-x)^{\gamma}$	and $g(x)=e^{c(x-1)},$ where $1<\gamma<2$, $0<a<1$, $0<c<\infty$, then as $a\rightarrow0$, $c\rightarrow0$ such that $a/c\equiv$ constant, $X$ has a Poisson distribution ``in the limit". 
	\end{remark}

	We also obtain precise large deviation probability of partial sum $S_{n}$,
	\begin{theorem}\label{theorem2}	
	If $0=\nu<\delta<1$ and \eqref{conditionB} is satisfied, then there exists sequences $\{x_{n}\}\uparrow\infty$ and $\{y_{n}\}\uparrow\infty$, such that
	\begin{equation*}		
		\lim_{n\rightarrow\infty}\sup_{x_{n}\leq x\leq y_{n}}\left|\frac{P(S_{n}>x)}{nx^{-\delta}L(x)}-1\right|=0,
	\end{equation*}
	where $L(x)$ is a slowly varying function determined by the generating function of $\xi$ and $\eta$, and one can choose $x_{n}=n^{\frac{1}{\delta}+\kappa_{1}}$, $y_{n}=n^{\kappa_{2}}$ for any $\kappa_{1}$, $\kappa_{2}>0$ with $\kappa_{2}-\kappa_{1}>\delta^{-1}$.
	\end{theorem}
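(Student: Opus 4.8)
The plan is to combine a branching decomposition of $S_n$ into independent contributions with a single-big-jump analysis, using the stationary tail from Theorem~\ref{theoremX2} as the building block. Group the individuals counted in $S_n=\sum_{k=1}^{n}X_k$ according to their immigration epoch: let $A_j$ be the total number of individuals in $X_j,\dots,X_n$ descending from the immigrants $\eta_j$ arriving at time $j$ (those immigrants included). Then $S_n=\sum_{j=1}^{n}A_j$ with the $A_j$ independent, and $A_j\stackrel{d}{=}B_{n-j}$, where $B_m$ denotes the total progeny over generations $0,1,\dots,m$ of a single batch of $\eta$ immigrants, each founding an independent Galton--Watson tree with offspring law $\xi$. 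Conditioning the joint transform $\psi_n(s,t)=E[s^{S_n}t^{X_n}]$ on $\mathcal{F}_{n-1}$ gives $\psi_n(s,t)=g(st)\,\psi_{n-1}(s,f(st))$, and iterating yields the clean identity $E[s^{S_n}]=\prod_{m=0}^{n-1}g(h_m(s))=\prod_{m=0}^{n-1}E[s^{B_m}]$, where $h_0(s)=s$ and $h_m(s)=sf(h_{m-1}(s))$ is the generating function of the single-particle total progeny through generation $m$. This simultaneously confirms the independence decomposition and supplies the transforms needed for the tail estimates.

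The core step will be to determine the tail of the individual contribution $B_m$, and here two competing ``big jumps'' appear, which is exactly the critical-case feature flagged in the abstract. Writing $B_m=\sum_{i=1}^{\eta}N^{(m)}_i$ with $N^{(m)}$ the single-particle total progeny through generation $m$, the event $\{B_m>x\}$ is produced either by one anomalously large immigration batch ($\eta>x$, of probability $\sim c\,x^{-\delta}L_2(x)$), or by one immigrant founding an anomalously large tree. For the latter, since $E[\eta]=\infty$ one finds $P(\exists\,i\le\eta:\,N^{(m)}_i>x)\sim c\,\bigl(P(N^{(m)}>x)\bigr)^{\delta}$; and analyzing $v_m(t):=1-h_m(1-t)$ through the recursion $v_m=v_{m-1}\bigl(1-(1-t)L_1(v_{m-1}^{-1})\bigr)+t(1-v_{m-1})$ (the $\nu=0$ specialization) together with a Tauberian theorem (cf.\ \cite{Bingham}) gives $P(N^{(m)}>x)\sim c\,x^{-1}\widetilde L_1(x)$ below a horizon-dependent cutoff, so the offspring jump also enters at order $x^{-\delta}$, times a slowly varying factor built from $L_1$. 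Both effects thus contribute at the same order $x^{-\delta}L(x)$, and letting $m\to\infty$ identifies $L$ as the slowly varying part of $P(B_\infty>x)$, where $B_\infty=\lim_m B_m$, consistent with the constant in Theorem~\ref{theoremX2}. The quantitative outputs required are that $P(B_m>x)\to P(B_\infty>x)$ uniformly for $x$ below the horizon scale, and that $\sum_{m=0}^{n-1}P(B_m>x)\sim n\,x^{-\delta}L(x)$ uniformly on $[x_n,y_n]$.

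With these estimates the lower bound is routine. Since the $A_j$ are nonnegative and independent, $\{S_n>x\}\supseteq\bigcup_{j=1}^{n}\{A_j>x\}$, so by Bonferroni $P(S_n>x)\ge\sum_{j}P(A_j>x)-\sum_{j<j'}P(A_j>x)P(A_{j'}>x)$. The subtracted term is $O\bigl((\sum_jP(A_j>x))^2\bigr)=O\bigl((n\,x^{-\delta}L(x))^2\bigr)$, of smaller order precisely because $n\,x^{-\delta}L(x)\to0$ on $[x_n,y_n]$; this is what forces the lower cutoff $x_n=n^{1/\delta+\kappa_1}$. Hence $P(S_n>x)\ge(1-o(1))\sum_{m=0}^{n-1}P(B_m>x)=(1-o(1))\,n\,x^{-\delta}L(x)$, uniformly in the stated range.

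The main obstacle will be the matching upper bound, uniform in $x\in[x_n,y_n]$: I must show that $S_n$ cannot appreciably exceed $x$ without a single $A_j$ being of order $x$, i.e.\ that the family $\{B_m\}$ behaves subexponentially. Because the summands are not identically distributed (each $A_j\stackrel{d}{=}B_{n-j}$ has its own horizon) and have infinite mean ($\delta<1$), the route will be to truncate each $A_j$ at a level slightly below $x$, bound the probability that the truncated sum is large by a Fuk--Nagaev/Markov estimate using that the truncated means are regularly varying, and collect the single untruncated exceedance, with every estimate made uniform in $m=n-j$ and in $x$. The upper cutoff $y_n=n^{\kappa_2}$ is dictated exactly here: once $x$ exceeds the horizon scale reachable in $n$ generations, the offspring big jump is suppressed (an offspring-driven exceedance carries index $1/(1+\nu)=1$ rather than the immigration index $\delta$), so $P(B_m>x)$ drops below $c\,x^{-\delta}L(x)$ and the ratio can no longer converge to $1$; the constraint $\kappa_2-\kappa_1>1/\delta$ merely keeps the window $[x_n,y_n]$ nonempty. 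Assembling the two bounds with $\sum_{m=0}^{n-1}P(B_m>x)\sim n\,x^{-\delta}L(x)$ yields the claim.
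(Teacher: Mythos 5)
Your decomposition $S_n=\sum_{j=1}^{n}A_j$ with $A_j\overset{d}{=}B_{n-j}$, together with the transform identity $E[s^{S_n}]=\prod_{m=0}^{n-1}g(h_m(s))$, is correct, and it is genuinely different from the paper's route: the paper writes $S_n=S_{n,1}-S_{n,2}$ (see \eqref{decomSn}), where $S_{n,1}$ is a sum of $n$ i.i.d.\ copies of the \emph{infinite-horizon} variable $Y^{(\infty)}$ with generating function $g(h(s))$ and $S_{n,2}\ge 0$ is the post-time-$n$ overshoot. That i.i.d.-ization lets the paper quote classical Nagaev-type large deviations wholesale (Proposition \ref{prop1Sn1}), so your self-declared ``main obstacle'' --- a Fuk--Nagaev bound for a non-identically distributed triangular array --- never arises there; the price is showing $P(S_{n,2}>\varepsilon x)=o(nx^{-\delta}L(x))$ uniformly, which is where Theorem~\ref{theoremX2} and Lemma~\ref{lemma} are consumed (Proposition \ref{prop2Sn2}). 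Your Bonferroni lower bound is fine, and your truncation upper bound is feasible since $\delta<1$ makes truncated means $O(x^{1-\delta}L(x))$. The genuine gap is the step you label merely a ``quantitative output required'': that $\sum_{m=0}^{n-1}P(B_m>x)\sim nx^{-\delta}L(x)$ uniformly on $[x_n,y_n]$. This is not a routine limit interchange; it is the entire content of the theorem in your formulation, and nothing in your sketch proves it.

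To see why it is hard: for fixed $m$ one has $EN^{(m)}=m+1<\infty$, so $1-h_m(1-t)\sim(m+1)t$ and $P(B_m>x)\asymp\left((m+1)/x\right)^{\delta}L_2\!\left(x/(m+1)\right)$, whereas by \eqref{eqh} with $\nu=0$, $1-h(1-t)\sim tL_3(1/t)$ with $L_3(x)\to\infty$ (since $ET=\infty$), so $P(B_\infty>x)\asymp x^{-\delta}L_3(x)^{\delta}L_2(\cdot)$. Hence the deficit $P(B_\infty>x)-P(B_m>x)$ is of \emph{full} order $P(B_\infty>x)$ for all $m$ below a crossover scale $m\approx L_3(x)$ (for $\nu\in(0,1)$ the crossover is $m\approx x^{\nu/(1+\nu)}$, which is exactly what produces the cutoff $y_n=n^{\frac{1+\nu}{\nu}-\kappa_2}$ in Theorem~\ref{theorem1}), and on your window the target $nx^{-\delta}L(x)\to 0$, so even an $O(1)$ total deficit --- e.g.\ bounding each deficit by the batch survival probability $1-g(f_{m+1}(0))$, whose sum converges by \eqref{condition} --- is uselessly large. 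Making the total deficit $o(nx^{-\delta}L(x))$ uniformly for $x\le n^{\kappa_2}$ requires quantitative, uniform-in-$(m,x)$ control of $h_m\to h$ composed with $g$, i.e.\ an estimate of the same depth as Lemma~\ref{lemma}. The telltale symptom is that hypothesis \eqref{conditionB} --- the only substantive assumption of the theorem --- never enters your argument, while in the paper it is precisely what makes $L_2/L_1\to p$ and delivers Lemma~\ref{lemma} and Theorem~\ref{theoremX2}; relatedly, your tail heuristic $P(\exists\, i\le\eta: N^{(m)}_i>x)\sim c\,(P(N^{(m)}>x))^{\delta}$ drops the factor $L_2(1/P(N^{(m)}>x))$, which is not harmless here because $L_2(x)\to0$ under \eqref{conditionB}. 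A completed version of your crossover analysis would have to engage both points; as it stands the proposal does not close.
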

	
	\begin{remark}
	By comparing Theorem \ref{theorem1} and Theorem \ref{theorem2} with theorems in Guo and Hong \cite{Guo1} for subcritical case, we can intuitively see the difference of large deviations for partial sum between subcritical and critical branching processes. On the one hand, the interval of uniformity is different. In the critical case, the limit is uniform within an interval $(x_{n},y_{n})$ having lower bound and upper bound requirements; however, in the subcritical case, we need the lower bound only. On the other hand, in the subcritical case, the large deviations of $S_{n}$ are influenced by the offspring and immigration directly depending on which one is ``heavier", i.e., uniformly for $x\geq x_{n}$,  $P(S_{n}>x)\sim c_{1}nP(\eta>x)$ or $P(S_{n}>x)\sim c_{2}nP(\xi>x)$, with some constants $c_{1}$ and $c_{2}$ related to the mean of offspring and immigration. But in the critical case, both the offspring and immigration distributions are influential, i.e., $P(S_{n}>x)\sim nx^{-\frac{\delta}{1+\nu}}L(x)$, where $\delta$ is determined by the offspring and $\nu$ is determined by the immigration.
	\end{remark}
	
	\begin{remark}	
	Actually, the difference between the subcritical  and  critical case is caused by the comparison of the tail probabilities of the stationary distribution $X$ and that of  $\xi$ and $\eta$. In the subsequent proof, we will decompose $S_{n}$ into $S_{n,1}-S_{n,2}$ (see \eqref{decomSn}), where $S_{n,1}$ is determined by $\eta$ and $\xi$, and  $S_{n,2}$ is determined by $X$ and $\xi$. In the subcritical case that the immigration is heavier, the tail of $X$ has the same order as $\eta$, and thus $S_{n,2}$ is negligible for $x\in (x_{n},\infty)$. But in the critical case, by Theorem \ref{theoremX1} and \ref{theoremX2}, although the immigration is still heavier, the tail of $X$ is different to that of $\eta$, so we need to compare $x$ and $n$ to find proper interval $(x_{n},y_{n})$ such that $S_{n,2}$ is still negligible.
	\end{remark}		
	
	\begin{remark}	
	By comparing Theorem \ref{theorem1} and Theorem \ref{theorem2}, we can also find that the $x$-regions of uniformity in the latter can be longer, i.e., in the heavy-tailed case, the upper bound can approach but not reach $n^{\frac{1+\nu}{\nu}}$, but when the tail is very heavy, the upper bound can be as large as possible since $\kappa_{2}$ can be arbitrary large, and this is  closer to the subcritical case. In fact, under \eqref{conditionf}, it was shown in Slack \cite{Slack} that when $0<\nu<1$, the survival probability of the underlying branching process without immigration tends to zero with speed $n^{-1/\nu}$, roughly. But when $\nu=0$, with the condition $L_{1}(x)=(\log x)^{-a}$ with $a>0$, the speed is roughly $\exp\{-n^{\frac{1}{1+a}}\}$ (see Andreas and Sagitov \cite{Andreas}), which is more quickly and closer to the subcritical case.
	\end{remark}		
	
	The article is organized as follows. In Section 2 we give some regular variations of the considered process, including the tail behavior of the underlying branching process without immigration and the stationary distribution (Theorem \ref{theoremX2}). With the tail probability of $X$ in hand, in Section 3, we can obtain precise large deviation probabilities of partial sum $S_{n}$ (Theorem \ref{theorem1} and  \ref{theorem2}).

\section{Regular variation of the critical process}
\subsection{The underlying process}
	Let $\{Z_{n}\}$ be the underlying critical branching process (without immigration), which is defined by $Z_{0}=1$ and
	\begin{equation*}
		Z_{n}=\sum_{i=1}^{Z_{n-1}}\xi_{n,i}, \quad n\in\mathbb{N_{+}},
	\end{equation*}
	where $\{\xi_{n}\}$ is the same as in \eqref{defGWI1} and having the generating function \eqref{conditionf}.	
	
	Let
	\begin{equation}\label{defT}
		T:=\sum_{i=0}^{\infty}Z_{i}
	\end{equation}
	be the total population of $\{Z_{n}\}$. Denote the generating function of $T$ by $h(x):=Ex^{T}$,
	then it is well known that $h(x)$ satisfies the equation
	\[h(x)=xf(h(x)).\]
	
	Taking (\ref{conditionf}) into the last equation, we have, for $0\leq\nu<1$,
	\[h(x)=xh(x)+x(1-h(x))^{1+\nu}L_{1}((1-h(x))^{-1}).\]

	Thus as $x\uparrow1$, noticing that $h(1)=1$, and using Theorem 1.5.5 in Seneta \cite {Seneta76},
	\begin{equation}\label{eqh}
		1-h(x)\sim (1-x)^{1/(1+\nu)}L_{3}((1-x)^{-1}),
	\end{equation}
	where $L_{3}(x)$ is also a slowly varying function.		
	
	\begin{remark}
		{\rm 
		If $\nu\in(0,1)$, then by Tauberian Theorem, we get $T$ is regularly varying, i.e.,
	\begin{equation}\label{eqT}
		P(T>x)\sim cx^{-1/(1+\nu)}L_{3}(x),
	\end{equation}
	which was also proved by Lemma 6 in Vatutin et al.  \cite{Vatutin}.
		}
	\end{remark}

\subsection{The stationary distribution}	

	Define $P_{n}(x)$ as the generating function of $X_{n}$, i.e.,
	\begin{equation}\label{eqPn}
		P_{n}(x):=Ex^{X_{n}}=\prod_{k=0}^{n-1}g(f_{k}(x)),
	\end{equation}	
	where $f_{n}(x):=Ex^{Z_{n}}$ satisfies $f_{n}(x)=f(f_{n-1}(x))$. Then under \eqref{conditionf} and \eqref{conditiong}, as $n\rightarrow\infty$, $P_{n}(x)$ converges to the generating function
	\begin{equation}\label{eqP}
		P(x):=Ex^{X}=\prod_{k=0}^{\infty}g(f_{k}(x)).
	\end{equation}
	
	The next result gives the speed of the convergence of $P_{n}(x)$ to $P(x)$, which is an important tool when analyzing the tail of $X$. Result for $0<\nu<1$ is proved by \cite{Imomov}, and here we complete the conclusion for $\nu=0$ .
	
	\begin{lemma}\label{lemma}
		If $0=\nu<\delta<1$ and \eqref{conditionB} is satisfied, then for all $x\in[0,1)$, as $n\rightarrow\infty$,
		\begin{equation*}
			P_{n}(x)=P(x)\left[1+p\delta^{-1}\left(1-f_{n}(x)\right)^{\delta}\left(1+o(1)\right)\right].
		\end{equation*}		
	\end{lemma}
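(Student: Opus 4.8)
The plan is to analyze the ratio $P_n(x)/P(x)$ directly. By \eqref{eqPn} and \eqref{eqP}, for each fixed $x\in[0,1)$,
\[
\frac{P_n(x)}{P(x)}=\frac{1}{\prod_{k=n}^{\infty}g(f_k(x))},
\]
so it suffices to show $\prod_{k=n}^{\infty}g(f_k(x))=1-p\delta^{-1}(1-f_n(x))^{\delta}(1+o(1))$ and then invert. Since the process is critical, $f_k(x)\uparrow1$ as $k\to\infty$, hence $u_k:=1-f_k(x)\downarrow0$ and each factor $g(f_k(x))=1-u_k^{\delta}L_2(u_k^{-1})\to1$. Taking logarithms and using $\log(1-t)=-t+O(t^2)$ for the small quantities $t=u_k^{\delta}L_2(u_k^{-1})$, I would reduce the problem to estimating the tail sum
\[
\Sigma_n:=\sum_{k=n}^{\infty}u_k^{\delta}L_2(u_k^{-1}),
\]
the quadratic remainder being controlled at the end.

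The key step is to turn $\Sigma_n$ into a Riemann sum. From \eqref{conditionf} with $\nu=0$ one has $1-f(y)=(1-y)(1-L_1((1-y)^{-1}))$, so applying this with $y=f_k(x)$ yields the recursion
\[
u_{k+1}=u_k\bigl(1-L_1(u_k^{-1})\bigr),\qquad\text{i.e.}\qquad u_k-u_{k+1}=u_kL_1(u_k^{-1}).
\]
By criticality $L_1(u_k^{-1})\to0$, so the increments $u_k-u_{k+1}$ vanish. Using \eqref{conditionB} to replace $L_2(u_k^{-1})$ by $pL_1(u_k^{-1})(1+o(1))$ and the recursion to write $u_k^{\delta}L_1(u_k^{-1})=u_k^{\delta-1}(u_k-u_{k+1})$, I obtain
\[
\Sigma_n=p\,(1+o(1))\sum_{k=n}^{\infty}u_k^{\delta-1}(u_k-u_{k+1}),
\]
where the uniform $o(1)$ over $k\ge n$ is legitimate because \eqref{conditionB} holds as the argument $u_k^{-1}\to\infty$.

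The remaining sum is a Riemann sum for $\int_0^{u_n}v^{\delta-1}\,dv=u_n^{\delta}/\delta$, with partition points $u_n>u_{n+1}>\cdots\downarrow0$ and mesh $u_k-u_{k+1}\to0$. Since $v^{\delta-1}$ is decreasing ($\delta-1<0$), evaluating at the right endpoint gives the lower bound $\sum_{k\ge n}u_k^{\delta-1}(u_k-u_{k+1})\le u_n^{\delta}/\delta$, while the left-endpoint sum $\sum_{k\ge n}u_{k+1}^{\delta-1}(u_k-u_{k+1})\ge u_n^{\delta}/\delta$. Expanding $u_{k+1}^{\delta-1}=u_k^{\delta-1}(1-L_1(u_k^{-1}))^{\delta-1}=u_k^{\delta-1}(1+O(L_1(u_k^{-1})))$, the gap between the two one-sided sums is of order $\sum_{k\ge n}u_k^{\delta}L_1(u_k^{-1})^2$, which is $o(u_n^{\delta})$ thanks to the extra vanishing factor $L_1(u_k^{-1})$. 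A squeeze then gives $\sum_{k\ge n}u_k^{\delta-1}(u_k-u_{k+1})=u_n^{\delta}\delta^{-1}(1+o(1))$, whence $\Sigma_n=p\delta^{-1}u_n^{\delta}(1+o(1))$. Feeding this back through the exponential, $\prod_{k\ge n}g(f_k(x))=\exp(-p\delta^{-1}u_n^{\delta}(1+o(1)))=1-p\delta^{-1}u_n^{\delta}(1+o(1))$, and inverting yields exactly $P_n(x)=P(x)[1+p\delta^{-1}(1-f_n(x))^{\delta}(1+o(1))]$.

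The main obstacle is the Riemann-sum step: the integrand $v^{\delta-1}$ is singular at $v=0$, precisely where the partition points accumulate, so one cannot appeal to uniform continuity over the whole range. What saves the argument is that $v^{\delta-1}$ is monotone and the integral converges (since $\delta>0$), giving clean one-sided bounds whose discrepancy carries the extra vanishing factor $L_1(u_k^{-1})$ and is therefore negligible. The same factor also guarantees that the quadratic remainder from the logarithm expansion, $\sum_{k\ge n}(u_k^{\delta}L_2(u_k^{-1}))^2=o(u_n^{\delta})$, and that the error introduced by \eqref{conditionB} does not accumulate, so all approximations collapse into the single $(1+o(1))$ asserted by the statement.
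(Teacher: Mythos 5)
Your proof is correct, and it takes a genuinely different, more elementary route than the paper's. The paper does not expand the tail product by hand: it imports equations (4.14)--(4.17) of Imomov and Tukhtaev to reduce the lemma to the tail sum $\Sigma_n(x)=\sum_{k\ge n}[1-g(f_k(x))]$, and then evaluates $\Sigma_n$ by passing through Slack's limit function $U$ (satisfying $U(f(x))=U(x)+1$), its inverse $V$, and the derivative formula $U'(y)=\psi(y)/\bigl((1-y)\Lambda(1-y)\bigr)$ from Lemma 2 of \cite{Imomov}; these convert the sum into the integral $\int_{f_n(x)}^1[1-g(y)]U'(y)\,dy$ up to a one-step error term, which is then computed via the mean value theorem and Karamata's theorem, using $\mathscr{L}=L_2/L_1\to p$. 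You bypass all of this machinery: the logarithmic expansion of $\prod_{k\ge n}g(f_k(x))$ recovers the reduction to $\Sigma_n$ directly (your quadratic remainder is indeed $o(\Sigma_n)$, since $\sup_{k\ge n}u_k^{\delta}L_2(u_k^{-1})\to0$), and the identity $u_k^{\delta}L_1(u_k^{-1})=u_k^{\delta-1}(u_k-u_{k+1})$ --- available precisely because $\nu=0$ makes the recursion read $u_{k+1}=u_k(1-L_1(u_k^{-1}))$, matching the paper's \eqref{eq21}, and because \eqref{conditionB} ties $L_2$ to $L_1$ --- turns $\Sigma_n$ into a monotone Riemann sum for $\int_0^{u_n}v^{\delta-1}\,dv$. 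Your squeeze does close: the gap between the endpoint sums is at most $C\epsilon_n\sum_{k\ge n}u_k^{\delta-1}(u_k-u_{k+1})\le C\epsilon_n u_n^{\delta}/\delta$ with $\epsilon_n:=\sup_{k\ge n}L_1(u_k^{-1})\to0$, the needed bound on the main sum coming from your own right-endpoint inequality, so no circularity arises; the same mechanism absorbs the error from \eqref{conditionB}. Both arguments ultimately exploit the same cancellation $L_2/L_1\to p$ against the increment $u_k-u_{k+1}=u_kL_1(u_k^{-1})$, but yours does it by elementary monotone comparison, gaining self-containment (no appeal to Slack's stationary-measure theory, to $U$ and $V$, or to Karamata), while the paper's route buys structural alignment with the $0<\nu<1$ case, where the $U,V$ substitution is what makes the argument of \cite{Imomov} go through and the cited equations can be reused verbatim. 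One pedantic remark: dividing by $P(x)$ presupposes $P(x)>0$, which holds for all $x\in(0,1)$ (and at $x=0$ can only fail when $g(0)=0$, in which case $P_n(0)=P(0)=0$ and the statement is vacuous), so your ratio formulation is harmless.
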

	\begin{proof}
		For the simplicity of notations, define $R_{n}(x):=1-f_{n}(x)$ and 
		$\Lambda(x):=L_{1}\left(x^{-1}\right),$
		then \eqref{conditionf} can be rewritten as
		\begin{equation}\label{eq21}
			R_{n+1}(x)=R_{n}(x)-R_{n}(x)\Lambda(R_{n}(x)).
		\end{equation}
		 
		Combining equation (4.14) - (4.17) in \cite{Imomov}, we have
		\begin{equation*}
			P_{n}(x)=P(x)(1+\Sigma_{n}(x)(1+o(1))),
		\end{equation*}
		where $\Sigma_{n}(x):=\sum_{k=n}^{\infty}[1-g(f_{k}(x))].$
		
		Slack \cite{Slack} has shown that when the process is critical, then as $n\rightarrow\infty$,
		\[U_{n}(x):=\frac{f_{n}(x)-f_{n}(0)}{f_{n}(0)-f_{n-1}(0)}\rightarrow U(x),\]
		where the limit function $U(x)$ is the generating function of the stationary measure of $\{Z_{n}\}$, satisfying $U(f(x))=U(x)+1$. By Lemma 2 in \cite{Imomov},
		\[U'(x)=\frac{\psi(x)}{(1-x)\Lambda(1-x)},\]
		where the function $\psi(x)$ satisfies $f'(x)\leq\psi(x)\leq1$ for $x\in[0,1)$ and $\psi(1-)=1$.
		
		Write $V(\cdot)$ as the inverse function of $U(1-x)$, then 
		\[1-f_{k}(x)=V(k+U(x)),\]
		\[1-g(f_{k}(x))=1-g[1-V(k+U(x))].\]
		
		By setting $1-y:=V(U(x)+t)$, we have $U(y)=U(x)+t$ and
		\begin{equation*}
			\begin{aligned}
				\int_{n}^{n+1}[1-g(1-V(U(x)+t))]dt
				=\int_{f_{n}(x)}^{f_{n+1}(x)}[1-g(y)]U'(y)dy
				=\int_{f_{n}(x)}^{f_{n+1}(x)}\psi(y)\frac{\mathscr{L}(1/1-y)}{(1-y)^{1-\delta}}dy,
			\end{aligned}
		\end{equation*}
		 where $\mathscr{L}(x):=L_{2}(x)\cdot L_{1}^{-1}(x)$ is also a slowly varying function and tends to $p$ as $x\rightarrow\infty$, by our assumption  \eqref{conditionB}. Then we have
		\[\int_{n}^{\infty}[1-g(1-V(U(x)+t))]dt\leq\Sigma_{n}(x)\leq\int_{n-1}^{\infty}[1-g(1-V(U(x)+t))]dt,\]
		i.e.,
		\begin{equation}\label{eqSigma}
		\Sigma_{n}(x)=\int_{f_{n}(x)}^{1}\psi(y)\frac{\mathscr{L}(1/1-y)}{(1-y)^{1-\delta}}+\sigma_{n}(x),
		\end{equation}
		where \[0\leq\sigma_{n}(x)\leq\int_{f_{n-1}(x)}^{f_{n}(x)}\psi(y)\frac{\mathscr{L}(1/1-y)}{(1-y)^{1-\delta}}dy.\]

		As for the first part of \eqref{eqSigma}, using mean value theorem and Karamata Theorem, for some $\theta=\theta(x)\in(0,1)$, as $n\rightarrow\infty$,
		\begin{equation*}
			\begin{aligned}
			\int_{f_{n}(x)}^{1}\psi(y)\frac{\mathscr{L}(1/1-y)}{(1-y)^{1-\delta}}dy
			&=\psi(1-\theta R_{n}(x)) \int_{f_{n}(x)}^{1}\frac{\mathscr{L}(1/1-y)}{(1-y)^{1-\delta}}dy\\
			&=\int_{R_{n}^{-1}(x)}^{\infty}y^{-(\delta+1)}\mathscr{L}(y)dy\cdot (1+o(1))\\
			&=p\delta^{-1}[R_{n}(x)]^{\delta}\cdot(1+o(1)).
			\end{aligned}
		\end{equation*}

	    As for the second part of \eqref{eqSigma}, for some $\theta=\theta(x)\in(0,1)$, $\phi=\phi(x)\in(0,1)$, define
	    \[\phi_{n}(x)=f_{n}(x)-\phi R_{n-1}(x)\Lambda(R_{n-1}(x)),\]
	    \[\theta_{n}(x)=R_{n}(s)+\theta R_{n-1}(x)\Lambda(R_{n-1}(x)),\]
	    then  by \eqref{eq21} and $\Lambda(x)\rightarrow0$ as $x\rightarrow0$,
		\begin{equation*}
			\begin{aligned}
			\int_{f_{n-1}(x)}^{f_{n}(x)}\psi(y)\frac{\mathscr{L}(1/1-y)}{(1-y)^{1-\delta}}dy
	    	&=\psi(\phi_{n}(x))\mathscr{L}(1/\theta_{n}(x))\int_{f_{n-1}(x)}^{f_{n}(x)}\frac{1}{(1-y)^{1-\delta}}dy\\
			&= p\int_{R_{n-1}^{-1}(x)}^{R_{n}^{-1}(x)}y^{-(\delta+1)}dy \cdot(1+o(1))\\
			&=p\delta^{-1}\left\{[R_{n-1}(x)]^{\delta}-[R_{n}(x)]^{\delta}\right\}\cdot (1+o(1))\\
			&=o([R_{n}(x)]^{\delta}),
			\end{aligned}
		\end{equation*}
		thus the lemma follows.
	\end{proof}
	
	Then we can prove that the stationary distribution $X$ for $\nu=0$ is also regularly varying, by a different method as that for $0<\nu<1$ in \cite{Imomov}.

	\begin{proof}[Proof of Theorem \ref{theoremX2}]
	Given $s\in(0,1)$, since for critical branching process $\{Z_{n}\}$, $f_{n}(0)\rightarrow1$, there exists $\lambda=\lambda(s)$, s.t.,
	\begin{equation}\label{eq31}
		f_{\lambda}(0)<s\leq f_{\lambda+1}(0).
	\end{equation}
	
	Notice that  $L_{1}(x)\rightarrow0$ when $\nu=0$,
	\begin{align*}
		\frac{1-f_{n+1}(0)}{1-f_{n}(0)}
		=\frac{1-f(f_{n}(0))}{1-f_{n}(0)}
		=1-L_{1}\left(\frac{1}{1-f_{n}(0)}\right)
		\rightarrow 1
	\end{align*}
	as $n\rightarrow\infty$, so as $s\uparrow1$,
	\begin{equation}\label{eq32}
		1-s\sim 1-f_{\lambda}(0). 
	\end{equation}

	From \eqref{eq31} we also have $P(f_{\lambda}(0))<P(s)\leq P(f_{\lambda+1}(0))$, then taking \eqref{eqP} into account, 
	\begin{equation*}
		1-\frac{P(0)}{P_{\lambda+1}(0)}\leq 1-P(s)< 1-\frac{P(0)}{P_{\lambda}(0)},
	\end{equation*}
	and thus by Lemma \ref{lemma},
	\begin{equation}\label{eq33}
		p\delta^{-1}(1-f_{\lambda+1}(0))^{\delta}(1+o(1))\leq 1-P(s)< p\delta^{-1}(1-f_{\lambda}(0))^{\delta}(1+o(1)).
	\end{equation}
	
	Then combining \eqref{eq32}--\eqref{eq33}, we get $1-P(s)\sim p\delta^{-1}(1-s)^{\delta}.$ And the theorem follows by Tauberian Theorem.	
\end{proof}

\section{Large deviation of $S_{n}$}

\subsection{Decomposition of $S_{n}$}

	To ease notation, we introduce the i.i.d. random operator $\theta_{n}(n\in\mathbb{N_{+}})$ as
	\[\theta_{n}\circ k=\sum_{i=1}^{k}\xi_{n,i}, \quad k\in\mathbb{N},\]
	where $\theta_{n}\circ 0=0$. And $\theta_{n}\circ(k_{1}+k_{2})\overset{\text{d}}{=}\theta_{n}^{(1)}\circ k_{1}+\theta_{n}^{(2)}\circ k_{2}$, where $\theta_{n}^{(1)}$ and $\theta_{n}^{(2)}$ on the right-hand side are independent with the same distribution as $\theta_{n}$. Then (\ref{defGWI1}) can be written as
	\begin{equation}\label{defGWI2}
		X_{n}=\theta_{n}\circ X_{n-1}+\eta_{n}, \quad n\in\mathbb{N_{+}}. 
	\end{equation}

	Define
	\begin{equation*}
		\Pi_{i,j}=\left\{
		\begin{aligned}
			&\theta_{j}\circ\theta_{j-1}\circ\cdots\circ\theta_{i}, &i\leq j,\\
			&1, &i>j.
		\end{aligned}
		\right.
	\end{equation*}
	
	Then
	$X_{n}=\sum_{i=1}^{n}\Pi_{i+1,n}\circ\eta_{i},$
	and we can write
	\begin{equation}\label{decomSn}
		\begin{aligned}
			S_{n}
			=\sum_{i=1}^{n}\sum_{m=i}^{\infty}\Pi_{i+1,m}\circ\eta_{i}
			-\sum_{i=1}^{n}\sum_{m=n+1}^{\infty}\Pi_{i+1,m}\circ\eta_{i}		
			:=S_{n,1}-S_{n,2}.
		\end{aligned}
	\end{equation}
	
	The first term of the right hand of (\ref{decomSn}) is
	\[
		S_{n,1}
		=\sum_{m=1}^{\infty}\Pi_{2,m}\circ\eta_{1}+\cdots+\sum_{m=n}^{\infty}\Pi_{n+1,m}\circ\eta_{n}
		:=Y_{1}^{(\infty)}+\cdots+Y_{n}^{(\infty)},
	\]
	where $\{Y_{i}^{(\infty)}\}_{i=1}^{n}$ are independent and have the same distribution as 
	\begin{equation}\label{decom1}
		Y^{(\infty)}\overset{\text{d}}{=}T^{(1)}+T^{(2)}+\cdots+T^{(\eta)},
	\end{equation}
	with $\{T^{(m)}\}_{m}$ being independent and having the same distribution as $T$ defined in (\ref{defT}).
	
	The second term  of the right hand in (\ref{decomSn}) is	
	\begin{align*}
		S_{n,2}
		:&=\sum_{i=1}^{n}(\sum_{m=n+1}^{\infty}\Pi_{n+1,m}\circ\Pi_{i+1,n}\circ\eta_{i})\\
		&=\sum_{m=n+1}^{\infty}\Pi_{n+1,m}\circ(\sum_{i=1}^{n}\Pi_{i+1,n}\circ\eta_{i})\\
		&=\sum_{m=n+1}^{\infty}\Pi_{n+2,m}\circ\theta_{n+1}\circ(\sum_{i=1}^{n}\Pi_{i+1,n}\circ\eta_{i}).
	\end{align*}
	
	So 
	\begin{equation}\label{decom2}
		S_{n,2}
		\overset{\text{d}}{=}T^{(1)}+T^{(2)}+\cdots+T^{(\theta\circ X_{n})},
	\end{equation}
	with $\{T^{(m)}\}_{m}$ being independent and having the same distribution as $T$ defined in (\ref{defT}).	

\subsection{Main idea of the proof}

	Observe that, for $\forall$ small $\varepsilon>0$,
	\begin{align*}
		&P(S_{n,1}>(1+\varepsilon)x)-P(S_{n,2}>\varepsilon x)\\
		\leq &P(S_{n,1}-S_{n,2}>x)\\
		\leq &P(S_{n,1}>(1-\varepsilon)x)+P(-S_{n,2}>\varepsilon x).
	\end{align*}

	We will show that $S_{n,2}$ does not contribute to the considered large deviations and the main order comes from $S_{n,1}$.
	
	For $S_{n,1}$, notice that it is the summation of i.i.d. regularly varying random variables, so we can use the known large deviation result to estimate.

	For $S_{n,2}$, we have, as $n\rightarrow\infty$, $X_{n}\rightarrow X$ increasingly in distribution. In fact, the conclusion comes from
		$X_{n}\overset{d}{=}\sum_{k=0}^{n-1}D_{n},$
		where $D_{0}:=\eta_{0}$ and $D_{n}:=\theta_{n}^{(n)}\circ\theta_{n-1}^{(n)}\circ\cdots\circ\theta_{1}^{(n)}\circ\eta_{n}$ $(n\geq1)$, $\{\eta_{n}\}_{n\geq0}$ are independent with the same distribution as $\eta$, $\{\theta_{i}^{(n)}\}_{n\geq1}$ are independent with the same distribution as $\theta_{i}$, and
		$X\overset{d}{=}\sum_{n=0}^{\infty}D_{n}.$
		Then we have $S_{n,2}$ converges to the limit
		\[S^{(\infty)}:=T^{(1)}+T^{(2)}+\cdots+T^{(\theta\circ X)}.\]
		By the distribution of $T$ and $X$, and the inheritable properties of regularly varying functions, we can get the tail of $S^{(\infty)}$ and prove that this part is negligible.

\subsection{Estimate of $S_{n,1}$}
	The tail behavior of $S_{n,1}$ is the same for $\nu\in(0,1)$ and $\nu=0$, since it is composed by $n$ i.i.d. random variables, each of which is regularly varying with index $\frac{\delta}{1+\nu}\in(0,1)$.
	
	\begin{proposition}\label{prop1Sn1}
	For $\nu\in[0,1)$ and $\nu<\delta<1$,
	\[\lim_{n\rightarrow\infty}\sup_{x\geq x_{n}}
	\left|\frac{P(S_{n,1}>x)}{nx^{-\delta/(1+\nu)}L_{4}(x)}-1\right|=0,\]
	where one can choose $x_{n}=n^{\frac{1+\nu}{\delta}+\kappa_{1}}$ for any $\kappa_{1}>0$ and $L_{4}(x)$ is a slowly varying function determined by the generating functions of $\xi$ and $\eta$.
	\end{proposition}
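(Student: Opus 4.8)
The plan is to exploit the i.i.d.\ structure of $S_{n,1}=Y_{1}^{(\infty)}+\cdots+Y_{n}^{(\infty)}$ recorded in \eqref{decom1}: it is a sum of $n$ independent copies of the compound variable $Y^{(\infty)}\overset{\mathrm{d}}{=}T^{(1)}+\cdots+T^{(\eta)}$, each $T^{(i)}$ distributed as the total progeny \eqref{defT}. The argument accordingly splits into two tasks. First I would determine the exact tail of a single summand $Y^{(\infty)}$ and show it is regularly varying with index $\alpha:=\delta/(1+\nu)$. Second, because $S_{n,1}$ is then a sum of $n$ i.i.d.\ regularly varying terms, I would invoke the classical precise large deviation theorem for such sums to transfer the one-summand tail to the whole sum, namely $P(S_{n,1}>x)\sim nP(Y^{(\infty)}>x)$ uniformly in the asserted range. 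Note this handles $\nu\in(0,1)$ and $\nu=0$ simultaneously.

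For the first task the cleanest route is through generating functions, which also explains why $L_{4}$ is "determined by the generating functions of $\xi$ and $\eta$." Since $Y^{(\infty)}$ is an $\eta$-fold random sum of independent copies of $T$, its generating function is the composition $Es^{Y^{(\infty)}}=g(h(s))$, with $h(s)=Es^{T}$ and $g$ the immigration generating function \eqref{conditiong}. Hence
\[
1-g(h(s))=\bigl(1-h(s)\bigr)^{\delta}\,L_{2}\bigl((1-h(s))^{-1}\bigr).
\]
Substituting the singular expansion \eqref{eqh}, $1-h(s)\sim(1-s)^{1/(1+\nu)}L_{3}((1-s)^{-1})$ (valid for all $\nu\in[0,1)$, which is why I use \eqref{eqh} rather than the marginal tail \eqref{eqT} that degenerates at $\nu=0$), and using that a slowly varying function composed with a regularly varying argument of positive index is again slowly varying, I would obtain, as $s\uparrow1$,
\[
1-g(h(s))\sim(1-s)^{\delta/(1+\nu)}\,\ell\bigl((1-s)^{-1}\bigr),\qquad \ell(t)=L_{3}(t)^{\delta}\,L_{2}\bigl(t^{1/(1+\nu)}\bigr),
\]
with $\ell$ slowly varying. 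Because $\alpha=\delta/(1+\nu)\in(0,1)$ (as $0\le\nu<\delta<1$), Karamata's Tauberian theorem together with the monotone density theorem (Theorem 1.7.2 and Corollary 1.7.3 of \cite{Bingham}), applied to $\sum_{k}P(Y^{(\infty)}>k)\,s^{k}=(1-g(h(s)))/(1-s)$, yield
\[
P(Y^{(\infty)}>x)\sim c\,x^{-\delta/(1+\nu)}L_{4}(x),
\]
a regularly varying tail of index $\alpha$, where $L_{4}$ (absorbing the constant) is the slowly varying function of the statement.

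For the second task, $S_{n,1}$ is a sum of $n$ i.i.d.\ variables with regularly varying law of index $\alpha\in(0,1)$; in particular $EY^{(\infty)}=\infty$, so no centering is needed. The classical precise large deviation equivalence for i.i.d.\ heavy-tailed summands (the single-big-jump principle in its uniform form) gives $P(S_{n,1}>x)\sim nP(Y^{(\infty)}>x)$ uniformly once $x$ exceeds the natural scale $b_{n}$ determined by $nP(Y^{(\infty)}>b_{n})\asymp1$, i.e.\ $b_{n}\asymp n^{1/\alpha}\ell_{*}(n)=n^{(1+\nu)/\delta}\ell_{*}(n)$ for a slowly varying $\ell_{*}$. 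With $x_{n}=n^{(1+\nu)/\delta+\kappa_{1}}$ one has $x_{n}/b_{n}\asymp n^{\kappa_{1}}/\ell_{*}(n)\to\infty$ for every $\kappa_{1}>0$, so the entire interval $x\ge x_{n}$ lies strictly inside the big-jump domain and the uniform equivalence applies. Combining the two tasks yields $P(S_{n,1}>x)\sim n\,x^{-\delta/(1+\nu)}L_{4}(x)$ uniformly for $x\ge x_{n}$, which is the claim.

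I expect the genuine obstacle to be the second task: securing the precise large deviation equivalence \emph{uniformly} in the infinite-mean regime $\alpha\in(0,1)$ and verifying that the admissible window reaches down to $x_{n}=n^{1/\alpha+\kappa_{1}}$. The upper bound (isolating one dominant summand) is routine; the matching lower bound—showing that configurations with two or more large summands, together with the bulk contribution, are negligible uniformly over $x\ge x_{n}$—is where the threshold $n^{1/\alpha+\kappa_{1}}$ is actually used and must be handled with care. By contrast, the first task is essentially bookkeeping with Karamata-type theorems, the only delicate point being the asymptotic insensitivity of $L_{2}$ under the slowly varying distortion of its argument induced by \eqref{eqh}, i.e.\ that $L_{2}\bigl(t^{1/(1+\nu)}L_{3}(t)^{-1}\bigr)\sim L_{2}\bigl(t^{1/(1+\nu)}\bigr)$.
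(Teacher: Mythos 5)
Your proposal is correct and follows essentially the same route as the paper: the tail of a single summand is obtained from the composite generating function $g(h(s))$ via \eqref{conditiong} and \eqref{eqh} together with Tauberian/monotone-density arguments, and the uniform equivalence $P(S_{n,1}>x)\sim nP(Y^{(\infty)}>x)$ for $x\geq n^{(1+\nu)/\delta+\kappa_{1}}$ is, exactly as in the paper, imported from the classical precise large deviation results for i.i.d.\ regularly varying summands (A.V.~Nagaev, S.V.~Nagaev, Cline--Hsing, Buraczewski et al.), so the ``genuine obstacle'' you anticipate in the second task is resolved by citation rather than proved from scratch. One small caution: your simplification $L_{2}\bigl(t^{1/(1+\nu)}L_{3}(t)^{-1}\bigr)\sim L_{2}\bigl(t^{1/(1+\nu)}\bigr)$ is not valid for arbitrary slowly varying $L_{2},L_{3}$ and is also unnecessary --- the paper simply keeps $L_{2}$ evaluated at the full composite argument, which is slowly varying because a slowly varying function composed with a regularly varying function of positive index is slowly varying, and this suffices since the proposition only asserts existence of some slowly varying $L_{4}$.
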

	\begin{proof}
	Combining \eqref{conditiong} and \eqref{eqh}, we have
	\[
			1-E(x^{Y^{(\infty)}})
			=1-g(h(x))
			\sim (1-x)^{\delta/(1+\nu)}\left(L_{3}\left(\frac{1}{1-x}\right)\right)^{\delta}L_{2}\left(\frac{1}{(1-x)^{1/(1+\nu)}L_{3}\left(\frac{1}{1-x}\right)}\right),		
		\]
	which means 
	\[P(Y^{(\infty)}>x)\sim x^{-\delta/(1+\nu)}L_{4}(x)\]
	as $x\rightarrow\infty$, where $L_{4}(x)$ is a slowly varying function determined by $L_{2}$ and $L_{3}$.
	
	Then $\forall$ $\varepsilon>0$, $\exists$ $x(\varepsilon)>0$, 
	\[\sup_{x\geq x(\varepsilon)}\left|\frac{P(Y^{(\infty)}>x)}{x^{-\delta/(1+\nu)}L_{4}(x)}-1\right|<\varepsilon.\]
	
	For any sequence $a_{n}\rightarrow\infty$, we can choose $N(\varepsilon)$ such that $\forall n>N$, $a_{n}>x(\varepsilon)$, and thus
	\[\sup_{x\geq a_{n}}\left|\frac{P(Y^{(\infty)}>x)}{x^{-\delta/(1+\nu)}L_{4}(x)}-1\right|\leq\sup_{x\geq x(\varepsilon)}\left|\frac{P(Y^{(\infty)}>x)}{x^{-\delta/(1+\nu)}L_{4}(x)}-1\right|<\varepsilon,\]
	so we get
	\begin{equation}\label{eq4A1}
		\lim_{n\rightarrow\infty}\sup_{x\geq a_{n}}\left|\frac{P(Y^{(\infty)}>x)}{x^{-\delta/(1+\nu)}L_{4}(x)}-1\right|=0.
	\end{equation}		
	
	Recall that $S_{n,1}=Y_{1}^{(\infty)}+\cdots+Y_{n}^{(\infty)}$
	is the summation of $n$ independent random variables having the same distribution as $Y^{(\infty)}$, using the large deviation probability result of i.i.d. regularly varying random variables, we have
	\begin{equation}\label{eq4A2}
		\lim_{n\rightarrow\infty}\sup_{x\geq x_{n}}
		\left|\frac{P(S_{n,1}>x)}{nP(Y^{(\infty)}>x)}-1\right|=0,
	\end{equation}
	where one can choose $x_{n}=n^{\frac{1+\nu}{\delta}+\kappa_{1}}$ for any $\kappa_{1}>0$ (for example, see A.V. Nagaev \cite{AV}, S.V. Nagaev   \cite{SV}, Cline and Hsing \cite{Cline}, or Theorem 1.2 of Buraczewski et al.  \cite{Buraczewski}).
	
	Combining equation (\ref{eq4A1}) and (\ref{eq4A2}),
	\begin{align*}
		0\leq &\varliminf_{n\rightarrow\infty}\sup_{x\geq x_{n}}
		\left|\frac{P(S_{n,1}>x)}{nx^{-\delta/(1+\nu)}L_{4}(x)}-1\right|\\
		\leq &\varlimsup_{n\rightarrow\infty}\sup_{x\geq x_{n}}
		\left|\frac{P(S_{n,1}>x)}{nx^{-\delta/(1+\nu)}L_{4}(x)}-1\right|\\
		\leq&\varlimsup_{n\rightarrow\infty}\sup_{x\geq x_{n}}\left|\frac{P(S_{n,1}>x)}{nP(Y^{(\infty)}>x)}-1\right|\cdot
		\sup_{x\geq x_{n}}\frac{P(Y^{(\infty)}>x)}{x^{-\delta/(1+\nu)}L_{4}(x)}\\
		&+\varlimsup_{n\rightarrow\infty}\sup_{x\geq x_{n}}\left|\frac{P(Y^{(\infty)}>x)}{x^{-\delta/(1+\nu)}L_{4}(x)}-1\right|=0,
	\end{align*}
	and the proposition follows. 
\end{proof}

\subsection{Estimate of $S_{n,2}$}

	Here, for the tail of $S_{n,2}$, the  heavy-tailed and very heavy-tailed cases exhibit different properties, so we summarize them in two propositions.
	
	\begin{proposition}\label{prop1Sn2}
	If $0<\nu<\delta<1$ and  \eqref{conditionA1}-\eqref{conditionA2} are satisfied, then
	\[\lim_{n\rightarrow\infty}\sup_{a_{n}\leq x\leq y_{n}}\frac{P(S_{n,2}>x)}{nx^{-\delta/(1+\nu)}L_{4}(x)}=0,\]
	where one can choose $y_{n}=n^{\frac{1+\nu}{\nu}-\kappa_{2}}$ for any $0<\kappa_{2}<\frac{1+\nu}{\nu}$, and any sequence $a_{n}\rightarrow\infty$, with $a_{n}\leq y_{n}$ for all $n\in\mathbb{N_{+}}$. $L_{4}(x)$ is the  slowly varying function defined in Proposition \ref{prop1Sn1}. 
	\end{proposition}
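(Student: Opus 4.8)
The plan is to dominate $S_{n,2}$ by its distributional limit and then extract the tail of that limit through a generating-function identity. Since $X_{n}\uparrow X$ increasingly in distribution, the representation \eqref{decom2} gives $S_{n,2}\le_{\mathrm{st}}S^{(\infty)}=T^{(1)}+\cdots+T^{(\theta\circ X)}$, so $P(S_{n,2}>x)\le P(S^{(\infty)}>x)$ for every $x$, and it suffices to control the tail of $S^{(\infty)}$. The first step is to compute $Ex^{S^{(\infty)}}$ in closed form. Conditioning first on the count $\theta\circ X$ and then on $X$ yields $Ex^{S^{(\infty)}}=E\big[h(x)^{\theta\circ X}\big]=P(f(h(x)))$, where $h(x)=Ex^{T}$ and $P(\cdot)$ is the generating function of $X$ from \eqref{eqP}. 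Using the defining relation $h(x)=xf(h(x))$, this collapses to the clean identity $Ex^{S^{(\infty)}}=P(h(x)/x)$.

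The second step is to read off the tail of $S^{(\infty)}$ from the behaviour of $P(h(x)/x)$ as $x\uparrow1$. Since $1/(1+\nu)<1$, \eqref{eqh} gives $x-h(x)=(1-h(x))-(1-x)\sim(1-x)^{1/(1+\nu)}L_{3}((1-x)^{-1})$, and hence $1-h(x)/x\sim(1-x)^{1/(1+\nu)}L_{3}((1-x)^{-1})$ as $x\uparrow1$. On the other hand, Theorem \ref{theoremX1} together with the Tauberian theorem gives $1-P(s)\sim c(1-s)^{\delta-\nu}$ as $s\uparrow1$. Composing these,
\[1-Ex^{S^{(\infty)}}=1-P(h(x)/x)\sim c\,(1-x)^{(\delta-\nu)/(1+\nu)}\big[L_{3}((1-x)^{-1})\big]^{\delta-\nu},\]
and, since the index $(\delta-\nu)/(1+\nu)$ lies in $(0,1)$, a further application of the Tauberian theorem yields $P(S^{(\infty)}>x)\sim c\,x^{-(\delta-\nu)/(1+\nu)}\widetilde L(x)$ for a slowly varying $\widetilde L$.

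The third step is the comparison with the target order. Dividing the two tails, the ratio
\[\frac{P(S^{(\infty)}>x)}{nx^{-\delta/(1+\nu)}L_{4}(x)}\sim\frac{c\,x^{\nu/(1+\nu)}\,\widetilde L(x)/L_{4}(x)}{n}\]
is, up to a slowly varying factor, of order $x^{\nu/(1+\nu)}/n$. The exponent $\nu/(1+\nu)$ is strictly positive, which is precisely why an upper bound on $x$ is forced here, in contrast to $S_{n,1}$ in Proposition \ref{prop1Sn1}. For $x\le y_{n}=n^{(1+\nu)/\nu-\kappa_{2}}$ one has $x^{\nu/(1+\nu)}\le n^{1-\kappa_{2}\nu/(1+\nu)}$, so the ratio is of order $n^{-\kappa_{2}\nu/(1+\nu)}$ up to slowly varying corrections and tends to $0$.

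The main obstacle is upgrading this pointwise bound to one uniform over the whole window $a_{n}\le x\le y_{n}$, because the tail asymptotic for $S^{(\infty)}$ holds only as $x\to\infty$. To handle this I would fix a large threshold and use $a_{n}\to\infty$ to replace the $\sim$ by a two-sided inequality valid for all $x\ge a_{n}$, and then invoke Potter's bounds together with the fact that $x\mapsto x^{\nu/(1+\nu)}\widetilde L(x)/L_{4}(x)$ is regularly varying with positive index, hence has an asymptotically increasing envelope, to bound its supremum over $[a_{n},y_{n}]$ by a constant multiple of its value at $y_{n}$. Since slowly varying functions are dominated by every positive power, evaluating at $y_{n}$ leaves the genuine decay $n^{-\kappa_{2}\nu/(1+\nu)}\to0$, which closes the argument.
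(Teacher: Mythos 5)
Your proof is correct, and its skeleton coincides with the paper's: dominate $P(S_{n,2}>x)$ by $P(S^{(\infty)}>x)$ via the increasing distributional convergence $X_{n}\uparrow X$, establish $P(S^{(\infty)}>x)\sim x^{-(\delta-\nu)/(1+\nu)}\widetilde L(x)$, and then beat the surplus exponent $\nu/(1+\nu)$ against $n$ over the window $[a_{n},y_{n}]$ — including the correct diagnosis that this strictly positive surplus is what forces the upper bound $y_{n}$. You deviate in two sub-steps, both defensibly. First, for the tail of $S^{(\infty)}$ the paper invokes the inheritance of regular variation under random sums (cited to Fay et al., Robert--Segers, Barczy et al.) to get $P(\theta\circ X>x)\sim P(X>x)\sim cx^{-(\delta-\nu)}$ and then repeats the generating-function composition of Proposition \ref{prop1Sn1}; your identity $Ex^{S^{(\infty)}}=E\bigl[h(x)^{\theta\circ X}\bigr]=P(f(h(x)))=P(h(x)/x)$, exploiting $h(x)=xf(h(x))$, eliminates that external lemma and is more self-contained (your intermediate step $1-h(x)/x\sim(1-x)^{1/(1+\nu)}L_{3}((1-x)^{-1})$ is sound because $(1-x)^{\nu/(1+\nu)}=o(L_{3}((1-x)^{-1}))$, using $\nu>0$). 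Second, for uniformity the paper splits the surplus exponent as $r_{1}-r_{2}=\nu/(1+\nu)$ with an explicit $r_{2}=\frac{\kappa_{2}\nu^{2}}{2(1+\nu)(1+\nu-\kappa_{2}\nu)}>0$ chosen so that $y_{n}^{r_{1}}/n\to0$ while the factor $x^{-r_{2}}L_{5}(x)/L_{4}(x)$ absorbs the slowly varying ratio uniformly for $x\geq a_{n}$; your route — the asymptotically increasing envelope of the regularly varying function $x^{\nu/(1+\nu)}\widetilde L(x)/L_{4}(x)$ (Theorem 1.5.3 of Bingham et al.) evaluated at $y_{n}$, then dominating the slowly varying factor by $n^{\epsilon}$ with $\epsilon<\kappa_{2}\nu/(1+\nu)$ — is the same $\epsilon$-of-room mechanism in different packaging, and both consume the hypothesis $\kappa_{2}>0$ in exactly the same place. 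No gaps; the quantitative content of the two proofs is identical.
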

	\begin{proof}
	Since $X$ is regularly varying with index $\delta-\nu<1$ and $E\xi=1$, by the inheritable property of random sum (for example, see Fay et al.  \cite{Fay}, Robert et al.  \cite{Robert}, Barczy et al.  \cite{Barczy}, or the appendix of Guo and Hong \cite{Guo1}), we have, as $x\rightarrow\infty$,
	\[P(\theta\circ X>x)=P\left(\sum_{i=1}^{X}\xi_{i}>x\right)\sim P(X>x)\sim cx^{-(\delta-\nu)}.\]
		
	So using the same method as in the beginning of Proposition \ref{prop1Sn1}, we have, as $x\rightarrow\infty$,
	\begin{equation*}	
		P(S^{(\infty)}>x)\sim x^{-(\delta-\nu)/(1+\nu)}L_{5}(x),
	\end{equation*}
	where $L_{5}(x)$ is a slowly varying function.
		
	Define
	\[r_{1}:=\frac{\nu}{1+\nu}+\frac{\kappa_{2}\nu^{2}}{2(1+\nu)(1+\nu-\kappa_{2}\nu)}\]
	and
	\[r_{2}:=\frac{\kappa_{2}\nu^{2}}{2(1+\nu)(1+\nu-\kappa_{2}\nu)}.\]
	From $\kappa_{2}<(1+\nu)/\nu,$ we have $r_{2}>0$ and $0<(\frac{1+\nu}{\nu}-\kappa_{2})r_{1}<1$.
	
	Since as $n\rightarrow\infty$,
    $S_{n,2}$ converges to $S^{(\infty)}$ increasingly in distribution, using the properties of slowly varying functions, we get 
	\begin{align*}
		\sup_{a_{n}\leq x\leq y_{n}}\frac{P(S_{n,2}>x)}{nx^{-\delta/(1+\nu)}L_{4}(x)}
		\leq & \sup_{a_{n}\leq x\leq y_{n}}\frac{P(S^{(\infty)}>x)}{nx^{-\delta/(1+\nu)}L_{4}(x)}\\
		= & \sup_{a_{n}\leq x\leq y_{n}}\left(\frac{P(S^{(\infty)}>x)}{x^{-(\delta-\nu)/(1+\nu)}L_{5}(x)}\cdot \frac{x^{r_{1}}}{n}\cdot x^{-r_{2}}\frac{L_{5}(x)}{L_{4}(x)}\right)\\
		\leq &
		\frac{y_{n}^{r_{1}}}{n}
		\cdot
		\sup_{x\geq a_{n}}\frac{P(S^{(\infty)}>x)}{x^{-(\delta-\nu)/(1+\nu)}L_{5}(x)}
		\cdot
		\sup_{x\geq a_{n}}x^{-{r_{2}}}\frac{L_{5}(x)}{L_{4}(x)}\\
		\rightarrow & 0
	\end{align*}
	as $n\rightarrow\infty$.
	\end{proof}

	\begin{proposition}\label{prop2Sn2}
	If $0=\nu<\delta<1$ and  \eqref{conditionB} is satisfied, then
	\[\lim_{n\rightarrow\infty}\sup_{a_{n}\leq x\leq y_{n}}\frac{P(S_{n,2}>x)}{nx^{-\delta}L_{4}(x)}=0,\]
	where one can choose $y_{n}=n^{\kappa_{2}}$ for any $\kappa_{2}>0$, and any sequence $a_{n}\rightarrow\infty$ with $a_{n}\leq y_{n}$ for all $n\in\mathbb{N_{+}}$. $L_{4}(x)$ is the  slowly varying function defined in Proposition \ref{prop1Sn1}. 
	\end{proposition}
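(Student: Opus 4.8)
The plan is to follow the same architecture as the proof of Proposition \ref{prop1Sn2}, feeding in the $\nu=0$ inputs and then exploiting the one structural feature that distinguishes the very heavy-tailed case: the tail exponent of the limiting variable $S^{(\infty)}$ now coincides with the reference exponent $\delta$, instead of being strictly smaller. Concretely, since $S_{n,2}$ increases in distribution to $S^{(\infty)}=T^{(1)}+\cdots+T^{(\theta\circ X)}$ (see \eqref{decom2} and the discussion of the main idea), I would first use $P(S_{n,2}>x)\le P(S^{(\infty)}>x)$ for every $x$, and then reduce the whole statement to a tail estimate for $S^{(\infty)}$.

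First I would pin down the tail of the number of summands $\theta\circ X=\sum_{i=1}^{X}\xi_i$. By Theorem \ref{theoremX2}, $X$ is regularly varying with index $\delta\in(0,1)$, and since $E\xi=1<\infty$ with $\xi$ lighter than $X$ (index $1>\delta$), the inheritable property of random sums (the same references as in Proposition \ref{prop1Sn2}, e.g. \cite{Fay} or the appendix of \cite{Guo1}) gives $P(\theta\circ X>x)\sim P(X>x)\sim c\,x^{-\delta}$. Hence $\theta\circ X$ is itself regularly varying with index $\delta$, so its generating function $G$ satisfies $1-G(s)\sim c(1-s)^{\delta}$ as $s\uparrow1$.

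Next, using $E x^{S^{(\infty)}}=G(h(x))$ together with \eqref{eqh} (which for $\nu=0$ reads $1-h(x)\sim(1-x)L_3((1-x)^{-1})$) and the fact that $1-f(h(x))\sim 1-h(x)$ [since by \eqref{conditionf}, $1-f(s)=(1-s)\bigl(1-L_1((1-s)^{-1})\bigr)\sim 1-s$ because $L_1\to0$], I would obtain $1-E x^{S^{(\infty)}}\sim c\,(1-x)^{\delta}\,[L_3((1-x)^{-1})]^{\delta}$, and the Tauberian theorem then yields $P(S^{(\infty)}>x)\sim x^{-\delta}L_5(x)$ with $L_5$ slowly varying (proportional to $L_3^{\delta}$). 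This is exactly ``the same method as in the beginning of Proposition \ref{prop1Sn1}''.

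The crux is the final uniform estimate. Writing
\[\frac{P(S_{n,2}>x)}{n x^{-\delta}L_4(x)}\le \frac1n\cdot\frac{P(S^{(\infty)}>x)}{x^{-\delta}L_5(x)}\cdot\frac{L_5(x)}{L_4(x)},\]
the middle factor tends to $1$ (hence is bounded uniformly for $x\ge a_n$ once $a_n\to\infty$), while $L_5/L_4$ is slowly varying. Because the two tail exponents agree, there is no polynomial decay to help, and all the smallness must be extracted from the prefactor $1/n$. The key step is a Potter-type bound (\cite{Bingham}): for any $\varepsilon>0$ one has $L_5(x)/L_4(x)\le C x^{\varepsilon}$ for large $x$, so $\sup_{a_n\le x\le y_n}L_5(x)/L_4(x)\le C y_n^{\varepsilon}=C n^{\kappa_2\varepsilon}$; choosing $\varepsilon<1/\kappa_2$ makes $n^{-1}\sup_{a_n\le x\le y_n}L_5(x)/L_4(x)=O(n^{\kappa_2\varepsilon-1})\to0$, which proves the claim for any $\kappa_2>0$. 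I expect this last point to be the main obstacle to state cleanly, and it is precisely the mechanism explaining why here the upper bound $y_n$ may grow like an arbitrary power $n^{\kappa_2}$, in contrast with Proposition \ref{prop1Sn2}, where the strict gap in exponents forces a bounded exponent.
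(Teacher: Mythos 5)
Your proposal is correct and takes essentially the same route as the paper: bound $P(S_{n,2}>x)\le P(S^{(\infty)}>x)$ by the increasing convergence in distribution, obtain $P(S^{(\infty)}>x)\sim x^{-\delta}L_{5}(x)$ via the generating function $G(h(x))$ and the Tauberian theorem, and then let the prefactor $1/n$ absorb the slowly varying ratio $L_{5}/L_{4}$ through an arbitrarily small power of $x$. Your Potter-bound step with $\varepsilon<1/\kappa_{2}$ is exactly the paper's mechanism, which it implements by inserting the factor $x^{1/(2\kappa_{2})}\cdot x^{-1/(2\kappa_{2})}$ and bounding $x^{1/(2\kappa_{2})}\le y_{n}^{1/(2\kappa_{2})}=n^{1/2}$, i.e.\ the choice $\varepsilon=1/(2\kappa_{2})$.
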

	\begin{proof}
	Same as in Proposition \ref{prop1Sn2}, we have, as $x\rightarrow\infty$,
	\begin{equation*}	
		P(S^{(\infty)}>x)\sim x^{-\delta}L_{5}(x),
	\end{equation*}
	where $L_{5}(x)$ is a slowly varying function. Thus using the properties of slowly varying functions,
		\begin{align*}
		\sup_{a_{n}\leq x\leq y_{n}}\frac{P(S_{n,2}>x)}{nx^{-\delta}L_{4}(x)}
		\leq & \sup_{a_{n}\leq x\leq y_{n}}\frac{P(S^{(\infty)}>x)}{nx^{-\delta}L_{4}(x)}\\
		= & \sup_{a_{n}\leq x\leq y_{n}}\left(\frac{P(S^{(\infty)}>x)}{x^{-\delta}L_{5}(x)}\cdot \frac{x^{\frac{1}{2\kappa_{2}}}}{n}\cdot x^{-\frac{1}{2\kappa_{2}}}\frac{L_{5}(x)}{L_{4}(x)}\right)\\
		\leq &
		\frac{y_{n}^{\frac{1}{2\kappa_{2}}}}{n}
		\cdot
		\sup_{x\geq a_{n}}\frac{P(S^{(\infty)}>x)}{x^{-\delta}L_{5}(x)}
		\cdot
		\sup_{x\geq a_{n}}x^{-\frac{1}{2\kappa_{2}}}\frac{L_{5}(x)}{L_{4}(x)}\\
		\rightarrow & 0
	\end{align*}
	as $n\rightarrow\infty$.
	\end{proof}
	
	\begin{remark}
		{\rm
		Actually it is shown in the last inequality that we just need $y_{n}=O(n^{\gamma})$ for some $\gamma>0$, since the second term on the right hand tends to $1$ and the third term tends to $0$.
		}
	\end{remark}
	
\subsection{Proof of the main result}

\begin{proof}[Proof of Theorem \ref{theorem1} and \ref{theorem2}]

Let $\{x_{n}\}$ and $\{y_{n}\}$ be sequences satisfy the conditions in Proposition \ref{prop1Sn1}--\ref{prop2Sn2}. By the slow variation of $L_{4}(x)$, we have, for $\forall\varepsilon>0$,

	\[\lim_{n\rightarrow\infty}\sup_{x\geq x_{n}}\left|\frac{P(S_{n,1}>(1+\varepsilon)x)}{nx^{-\delta/(1+\nu)}L_{4}(x)}-1\right|
	=\lim_{n\rightarrow\infty}\sup_{x\geq x_{n}}\left|\frac{P(S_{n,1}>(1-\varepsilon)x)}{nx^{-\delta/(1+\nu)}L_{4}(x)}-1\right|=0\]
 and
	\[\lim_{n\rightarrow\infty}\sup_{x_{n}\leq x\leq y_{n}}\frac{P(S_{n,2}>\varepsilon x)}{nx^{-\delta/(1+\nu)}L_{4}(x)}=0.\]

	Then  we can conclude that for $0\leq \nu<\delta<1$,
	\begin{align*}
		0
		\leq&\varliminf_{n\rightarrow\infty}\sup_{x_{n}\leq  x\leq y_{n}}\left|\frac{P(S_{n}>x)}{nx^{-\delta/(1+\nu)}L(x)}-1\right|\\
		\leq&\varlimsup_{n\rightarrow\infty}\sup_{x_{n}\leq  x\leq y_{n}}\left|\frac{P(S_{n}>x)}{nx^{-\delta/(1+\nu)}L(x)}-1\right|\\
		\leq&\max
		\left\{\varlimsup_{\varepsilon\rightarrow0}\varlimsup_{n\rightarrow\infty}\sup_{x_{n}\leq  x\leq y_{n}}
		\left(\left|\frac{P(S_{n,1}>(1+\varepsilon)x)}{nx^{-\delta/(1+\nu)}L(x)}-1\right|
		+\frac{P(S_{n,2}>\varepsilon x)}{nx^{-\delta/(1+\nu)}L(x)}\right), \right.\\
		&\left.\qquad\;\;
		\varlimsup_{\varepsilon\rightarrow0}\varlimsup_{n\rightarrow\infty}\sup_{x_{n}\leq  x\leq y_{n}}
		\left|\frac{P(S_{n,1}>(1-\varepsilon)x)}{nx^{-\delta/(1+\nu)}L(x)}-1\right|
		\right\}\\
		\leq&\max
		\left\{
		\lim_{\varepsilon\rightarrow0}\lim_{n\rightarrow\infty}\sup_{x\geq 	x_{n}}\left|\frac{P(S_{n,1}>(1+\varepsilon)x)}{nx^{-\delta/(1+\nu)}L(x)}-1\right|
		+\lim_{\varepsilon\rightarrow0}\lim_{n\rightarrow\infty}\sup_{x_{n}\leq  x\leq 	y_{n}}\frac{P(S_{n,2}>\varepsilon x)}{nx^{-\delta/(1+\nu)}L(x)}, \right.\\
		&\left.\qquad\;\;
		\lim_{\varepsilon\rightarrow0}\lim_{n\rightarrow\infty}\sup_{x\geq 	x_{n}}\left|\frac{P(S_{n,1}>(1-\varepsilon)x)}{nx^{-\delta/(1+\nu)}L(x)}-1\right|
		\right\}\\
		=& 0,	
	\end{align*}
	with the notation that $L(x):=L_{4}(x)$, which completes the proof.
\end{proof}


\begin{thebibliography}{99} 
	


	
	\bibitem{Barczy}M. Barczy, Z. B\H osze and G. Pap, Regularly varying non-stationary Galton-Watson processes with immigration, Statist. Probab. Lett. {\bf 140} (2018), 106--114; MR3812257	
	
	\bibitem{Basrak}B. Basrak, R. Kulik and Z. Palmowski, Heavy-tailed branching process with immigration, Stoch. Models {\bf 29} (2013), no.~4, 413--434; MR3175851
	
	
	\bibitem{Bingham}N.~H. Bingham, C.~M. Goldie and J.~L. Teugels, {\it Regular variation}, Encyclopedia of Mathematics and its Applications, 27, Cambridge Univ. Press, Cambridge, (1987); MR0898871 
	
	
	\bibitem{Buraczewski}D. Buraczewski, E. Damek, T. Mikosch and J. Zienkiewicz,  Large deviations for solutions to stochastic recurrence equations under Kesten's condition, Ann. Probab. {\bf 41} (2013), no.~4, 2755--2790; MR3112931
	
	
	\bibitem{Cline}D.B.H. Cline, T. Hsing, Large deviation probabilities for sums of random variables with heavy or subexponential tails. Technical report, Texas A\&M Univ, College Station, TX, (1998).
	
	
	\bibitem{Fay}G. Fay, B. Gonzlez-Arvalo, T. Mikosch and G. Samorodnitskyx, Modeling teletraffic arrivals by a Poisson cluster process, Queueing Syst. {\bf 54} (2006), no.~2, 121--140; MR2268057 
	
	\bibitem{Foss}S. Foss and M. Miyazawa, Tails in a fixed-point problem for a branching process with state-independent immigration, Markov Process. Related Fields {\bf 26} (2020), no.~4, 613--635; MR4243155
	
	\bibitem{Foster}J.~H. Foster and J.~A. Williamson, Limit theorems for the Galton-Watson process with time-dependent immigration, Z. Wahrscheinlichkeitstheorie und Verw. Gebiete {\bf 20} (1971), 227--235; MR0305494
	
	
	\bibitem{Guo1}J. Guo and W. Hong, Precise large deviations for the total population of heavy-tailed subcritical branching processes with immigration, J. Theoret. Probab. {\bf 38} (2025), no.~1, Paper No. 13, 24 pp.; MR4822028
	
	\bibitem{Guo2}J. Guo, W. Hong and M. Sun, Precise large deviations for the total population of subcritical branching processes with immigration in random environment, Discrete Contin. Dyn. Syst. Ser. S {\bf 18} (2025), no.~10, 2752--2768; MR4954205
	
	\bibitem{Imomov}A.~A. Imomov and E.~E. Tukhtaev, On asymptotic structure of critical Galton-Watson branching processes allowing immigration with infinite variance, Stoch. Models {\bf 39} (2023), no.~1, 118--140; MR4538938
	
	\bibitem{Andreas} A.~N. Lager\aa s{} and S. Sagitov, Reduced branching processes with very heavy tails, J. Appl. Probab. {\bf 45} (2008), no.~1, 190--200; MR2409320
	
	\bibitem{AV}A.~V. Nagaev, Integral theorems with allowance for large deviations when Cram\'er's condition is violated, Dokl. Akad. Nauk SSSR {\bf 180} (1968), 279--281; MR0228051
	
	
	\bibitem{SV}S.~V. Nagaev, Large deviations of sums of independent random variables, Ann. Probab. {\bf 7} (1979), no.~5, 745--789; MR0542129
	
	
	\bibitem{Robert}C.~Y. Robert and J. Segers, Tails of random sums of a heavy-tailed number of light-tailed terms, Insurance Math. Econom. {\bf 43} (2008), no.~1, 85--92; MR2442033
	
	
	\bibitem{Seneta67}E. Seneta, The stationary distribution of a branching process allowing immigration: A remark on the critical case, J. Roy. Statist. Soc. Ser. B {\bf 30} (1968), 176--179; MR0229314
	
	
	\bibitem{Seneta76}E. Seneta, {\it Regularly varying functions}, Lecture Notes in Mathematics, Vol. 508, Springer, Berlin-New York, 1976; MR0453936
	
	
	\bibitem{Slack}R.~S. Slack, A branching process with mean one and possibly infinite variance, Z. Wahrscheinlichkeitstheorie und Verw. Gebiete {\bf 9} (1968), 139--145; MR0228077
	
	
	\bibitem{Vatutin}V.~A. Vatutin, V.~I. Wachtel and K. Fleischmann, Theory Probab. Appl. {\bf 52} (2008), no.~3, 470--492; translated from Teor. Veroyatn. Primen. {\bf 52} (2007), no.~3, 419--445; MR2743023
	
	\bibitem{Yushihang}S.~H. Yu, D.~H. Wang and X. Chen, Large and moderate deviations for the total population arising from a sub-critical Galton-Watson process with immigration, J. Theoret. Probab. {\bf 31} (2018), no.~1, 41--67; MR3769807
		
	\end{thebibliography}
\end{document}